\documentclass[11pt]{article}

\usepackage[margin=1in]{geometry}
\usepackage{amsmath,amssymb,amsfonts,amsthm}
\usepackage[colorlinks=true,linkcolor=blue,citecolor=blue,urlcolor=blue,pdfborder={0 0 0}]{hyperref}
\usepackage{titling}
\usepackage{booktabs}

\pretitle{\begin{flushleft}\Large}
\posttitle{\par\end{flushleft}\vskip 0.5em}

\theoremstyle{plain}
\newtheorem{theorem}{Theorem}[section]
\newtheorem{proposition}[theorem]{Proposition}
\newtheorem{lemma}[theorem]{Lemma}

\newtheorem{conjecture}[theorem]{Conjecture}
\newtheorem{question}[theorem]{Question}

\theoremstyle{definition}

\theoremstyle{remark}
\newtheorem{remark}[theorem]{Remark}

\newcommand{\C}{\mathbb{C}}

\preauthor{\begin{flushleft}}
\postauthor{\par\vspace{0.5em}\footnotesize
The Division of Physics, Mathematics and Astronomy, California Institute of Technology\\
Email: \href{mailto:looi@caltech.edu}{looi@caltech.edu}
\par\end{flushleft}}

\predate{\begin{flushleft}}
\postdate{\par\end{flushleft}}

\title{\bf Positive Berezin liminf does not imply essential positivity for radial Toeplitz operators on Bergman and Fock spaces}
\author{{\bf Sam Looi}}
\date{\small\today}

\begin{document}
\maketitle

\begin{abstract}
We study whether essential positivity
\[
\sigma_{\mathrm{ess}}(T_f)\subset [0,\infty)
\]
of a radial Toeplitz operator on Bergman and Fock spaces can be detected from the asymptotic behavior of its Berezin transform. For bounded real-valued radial symbols on $A^2(\mathbb{D})$, Per\"al\"a and Virtanen conjectured that
\[
\liminf_{|z|\to 1^-} \widetilde{f}(z)\ge 0
\]
should be equivalent to essential positivity, and they asked the analogous question on Fock space. Such a criterion would turn a spectral question into a scalar asymptotic test.

We prove that this fails even in the radial setting in which the conjecture was formulated. For every complex dimension $d\ge 1$, we construct explicit bounded real-valued radial symbols on the Bergman spaces $A^2(\mathbb{B}_d)$ and the Fock spaces $F^2(\mathbb{C}^d)$ whose Berezin transform has strictly positive limit inferior at the boundary (respectively, at infinity), while the essential spectrum of the corresponding Toeplitz operator contains a negative point. In particular, this disproves the Per\"al\"a--Virtanen conjecture in its original one-dimensional Bergman form and answers the analogous Fock-space question negatively; more generally, the radial Berezin liminf criterion fails in all dimensions in both settings. The underlying reason is that, for radial symbols, the Toeplitz eigenvalue sequence and the Berezin transform are different asymptotic averages of the same oscillatory symbol, and these averages damp the oscillation by different amounts.
\end{abstract}

\vspace{0.5em}
{\footnotesize\noindent\textit{2020 Mathematics Subject Classification.} 47B35 (primary); 30H20, 47A10, 41A60, 47B65 (secondary).\\
\textit{Keywords.} Toeplitz operators, essential positivity, Berezin transform, Bergman space, Fock space, radial symbols.}

\section{Introduction}

Perälä and Virtanen \cite{PeralaVirtanen} introduced the notion of essential positivity for self-adjoint operators, meaning that
\[
\sigma_{\mathrm{ess}}(T)\subset [0,\infty),
\]
and studied it for Toeplitz operators on the Hardy and Bergman spaces. On the Bergman space $A^2(\mathbb D)$ they proved that if $\mu$ is a real-valued radial Borel measure, $|\mu|$ is a Carleson measure for $A^2(\mathbb D)$, and the Berezin transform $\widetilde \mu$ has a boundary limit, then $T_\mu$ is essentially positive if and only if that limit is nonnegative. They then formulated the following conjecture and asked the corresponding question on the Fock space.

\begin{conjecture}[Per\"al\"a--Virtanen {\cite[Conjecture 1]{PeralaVirtanen}}]
\label{conj:PV-Bergman}
Let $f\in L^\infty(\mathbb D)$ be real-valued and radial. Then $T_f : A^2(\mathbb D) \to A^2(\mathbb D)$ is essentially positive if and only if
\[
\liminf_{|z|\to1^-}\widetilde f(z)\ge0.
\]
\end{conjecture}

\begin{question}[Per\"al\"a--Virtanen {\cite[Remark 12]{PeralaVirtanen}}]
\label{q:PV-Fock}
Let $f\in L^\infty(\mathbb C)$ be real-valued and radial. Is $T_f : F^2(\mathbb C) \to F^2(\mathbb C)$ essentially positive if and only if
\[
\liminf_{|z|\to\infty}\widetilde f(z)\ge0?
\]
\end{question}

At a conceptual level, this asks whether the sign of the essential spectrum of a radial Toeplitz operator can be read off from the asymptotic behavior of its Berezin transform. The boundary-limit theorem in \cite{PeralaVirtanen} is proved by a Tauberian argument, following Korenblum and Zhu \cite{KorenblumZhu}, and it depends crucially on the existence of an actual boundary limit for the Berezin transform. The conjectural $\liminf$ criterion is subtler. As we will show, this criterion compares two asymptotic procedures performed on the same radial symbol.

The main result of this paper is that the answer to both Conjecture~\ref{conj:PV-Bergman} and Question~\ref{q:PV-Fock} is ``no'' in the radial class in which the conjecture was formulated, not only in dimension one, but actually in all positive dimensions. For every $d\ge 1$ we construct explicit bounded real-valued radial symbols on the Bergman space $A^2(\mathbb B_d)$ and the Fock space $F^2(\mathbb C^d)$ such that the Berezin transform has strictly positive limit inferior, while the essential spectrum of the corresponding Toeplitz operator contains a negative point. Thus positive Berezin liminf does not characterize essential positivity for radial Toeplitz operators in any complex dimension, on either family of spaces, and positivity of $\liminf \widetilde f$ is stronger than nonnegativity.

Without radiality, Fulsche \cite{FulscheEssential} proved on the Fock space that, for bounded uniformly continuous symbols, such a Berezin criterion would imply an essential norm estimate contradicting a known obstruction adapted from Coburn. He further observed in \cite[Remark 2.12]{FulscheEssential} that the same indirect argument should transfer to the Bergman space, again without radiality. What remained open was whether radiality, which was the setting considered in the Perälä--Virtanen conjecture, might impose additional structure and restore the criterion. The present paper shows that it does not.

Our counterexamples are direct and completely explicit. On the Fock space, the symbol
\[
f(z)=\frac12+\cos(2|z|)
\]
is a counterexample on $F^2(\mathbb C^d)$ for every $n\ge1$. On the Bergman space, the one-dimensional counterexample is
\[
f(z)=\frac12+\cos\!\left(\log\frac{1}{1-|z|^2}\right),
\]
and in higher dimensions we use the same oscillatory profile with an explicit dimension-dependent offset
\[
f_d(z)=c_d+\cos\!\left(\log\frac{1}{1-|z|^2}\right).
\]
In each case the Berezin transform has positive limit inferior at the boundary (or at infinity in the Fock case), while the essential spectrum of the corresponding Toeplitz operator contains a negative point.
A noteworthy feature is that the Fock example is dimension-free, whereas on the Bergman side the fixed offset $1/2$ works only in low dimension; from dimension $12$ onward one must adjust the constant. Thus the higher-dimensional Bergman result is not merely a formal repetition of the one-dimensional argument.

The ideas of the proofs are as follows. For radial symbols on $F^2(\mathbb C^d)$ and $A^2(\mathbb B_d)$, the Toeplitz operator acts diagonally on monomials, and the corresponding eigenvalue depends only on the total degree. As a result, essential positivity is decided by the tail of one scalar eigenvalue sequence. The Berezin transform, however, averages the same radial profile in a different asymptotic regime. 
These two averages occur at different asymptotic scales. In the Bergman setting, the degree-$m$ eigenvalue averages the radial profile at boundary depth comparable to $(m+d)^{-1}$, whereas $\widetilde f(z)$ averages it at boundary depth comparable to $1-|z|^2$. In the Fock setting, the degree-$m$ eigenvalue averages the radial profile against the weight $r^{2m+2d-1}e^{-r^2}\,dr$, which is concentrated near $r=\sqrt{m+d}$, whereas $\widetilde f(z)$ averages against a unit-scale Gaussian centered at $z$, so for large $|z|$ it is concentrated near $r=|z|$. For the model symbol $f(r)=\frac12+\cos(2r)$, this means that the eigenvalue sequence reads the oscillation at phase $2\sqrt{m+d}$, while the Berezin transform reads it at phase $2|z|$.
For oscillatory radial symbols, this scale mismatch attenuates the oscillation by different factors. That is exactly what allows $\liminf \widetilde f$ to remain positive even though the essential spectrum still meets $(-\infty,0)$. In particular, the paper identifies not only that the Berezin criterion fails, but the asymptotic mechanism responsible for its failure. Despite the higher-dimensional statements, all of the asymptotic analysis reduces after radial reduction to explicit one-dimensional oscillatory integrals.

Radial Toeplitz operators on Bergman and Fock spaces have long been a natural testing ground precisely because of this diagonal structure; on the Bergman space see \cite{KorenblumZhu,GrudskyVasilevskiBergman}, and on the Fock space see \cite{GrudskyVasilevskiFock,EsmeralMaximenko,DewageOlafsson}. For Toeplitz operators on the Fock space with general bounded or measure symbols, boundedness, compactness, Toeplitz algebras, and Fredholmness were studied in \cite{IsralowitzZhu,BauerIsralowitz,FulscheCorrespondence,FulscheHagger}. Related formulas for eigenvalue sequences and Berezin transforms of radial Bergman measures were recently obtained by Maximenko and Pacheco \cite{MaximenkoPacheco}.

There are also earlier counterexamples of a different type. Zhao and Zheng \cite{ZhaoZheng} constructed Bergman-space symbols for which $\widetilde f\ge 0$ while $T_f\ngeq 0$, and Zhao \cite{ZhaoFock} obtained the analogous phenomenon on the Fock space. As Perälä and Virtanen observed, however, those examples do not address essential positivity. A related comparison appears in \cite{ChenLengZhao}, where a lower bound on the Berezin transform is shown not to imply invertibility for Toeplitz operators induced by positive measures on the Bergman and Fock spaces.

The paper is organized as follows. We begin with the one-dimensional Fock space, where the basic mechanism is clearest, then turn to the one-dimensional Bergman space, and finally pass to higher dimensions. Throughout, the argument stays entirely within the radial setting and avoids indirect operator-theoretic obstructions. Instead, the proofs reduce to explicit asymptotics for diagonal eigenvalues and Berezin transforms.

\section{Fock space}
\subsection{Radial Toeplitz operators on the Fock space}

Let $dA$ denote Lebesgue area measure on $\mathbb C$. We work on the standard Fock space
\[
F^2(\mathbb C)
= \left\{
h\text{ entire}:
\|h\|_{F^2}^2
:= \frac1\pi\int_{\mathbb C}|h(z)|^2e^{-|z|^2}\,dA(z)
<\infty
\right\}.
\]
For $f\in L^\infty(\mathbb C)$, let $T_f$ be the Toeplitz operator
\[
T_f h=P(fh),
\]
where $P$ is the orthogonal projection from
\[
 L^2 \left(\mathbb C,\pi^{-1}e^{-|z|^2}\,dA(z)\right)
\]
onto $F^2(\mathbb C)$.

The standard orthonormal basis is
\[
e_n(z)=\frac{z^n}{\sqrt{n!}}, \qquad n\ge 0.
\]
The normalized reproducing kernel at $z\in\mathbb C$ is
\[
k_z(w)=e^{w\overline z-|z|^2/2},
\]
and the Berezin transform of $f$, which is denoted by a tilde, is
\[
\widetilde f(z)
=\langle T_f k_z,k_z\rangle
=\frac1\pi\int_{\mathbb C} f(w)e^{-|w-z|^2}\,dA(w).
\]
If $f$ is real-valued, then $T_f$ is self-adjoint. As in \cite{PeralaVirtanen}, we say that $T_f$ is essentially positive if
\[
\sigma_{\mathrm{ess}}(T_f)\subset [0,\infty).
\]

\begin{lemma}\label{lem:radial-diagonal}
Let $f(z)=\varphi(|z|)$ be a bounded radial symbol. Then $T_f$ is diagonal with respect to the basis $(e_n)_{n\ge 0}$, and
\[
T_f e_n=\lambda_n e_n,
\qquad
\lambda_n
=\frac1{n!}\int_0^\infty \varphi(\sqrt t)e^{-t}t^n\,dt.
\]
\end{lemma}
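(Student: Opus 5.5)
The plan is to compute the matrix entries $\langle T_f e_n, e_m\rangle$ directly and show they vanish off the diagonal. Since $P$ is the orthogonal projection onto $F^2(\mathbb C)$ and $e_m\in F^2(\mathbb C)$, we have
\[
\langle T_f e_n,e_m\rangle=\langle P(fe_n),e_m\rangle=\langle fe_n,e_m\rangle
=\frac{1}{\pi\sqrt{n!\,m!}}\int_{\mathbb C}\varphi(|z|)\,z^n\overline z^{\,m}e^{-|z|^2}\,dA(z).
\]
This integral converges absolutely because $\varphi$ is bounded and the Gaussian weight beats any polynomial. We pass to polar coordinates $z=re^{i\theta}$, so the integrand becomes $\varphi(r)r^{n+m}e^{i(n-m)\theta}e^{-r^2}\,r\,dr\,d\theta$. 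Fubini applies thanks to absolute integrability, and the $\theta$-integral $\int_0^{2\pi}e^{i(n-m)\theta}\,d\theta$ equals $2\pi\delta_{nm}$. Therefore $\langle T_f e_n,e_m\rangle=0$ whenever $m\neq n$.

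Since $(e_m)$ is an orthonormal basis and $T_f e_n\in F^2(\mathbb C)$, expanding $T_f e_n=\sum_m\langle T_f e_n,e_m\rangle e_m$ gives $T_f e_n=\lambda_n e_n$ with $\lambda_n=\langle fe_n,e_n\rangle$. Boundedness of $T_f$ (norm at most $\|f\|_\infty$) means $T_f$ is determined by its action on the basis, so $T_f$ is diagonal.

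For $m=n$ the computation gives
\[
\lambda_n=\frac{2\pi}{\pi\,n!}\int_0^\infty\varphi(r)r^{2n+1}e^{-r^2}\,dr.
\]
Substituting $t=r^2$, so that $dt=2r\,dr$ and $r^{2n+1}\,dr=\tfrac12 t^n\,dt$, turns this into $\frac{1}{n!}\int_0^\infty\varphi(\sqrt t)e^{-t}t^n\,dt$, as claimed.

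There is no genuine obstacle in this argument. The only points needing care are justifying Fubini, which follows from boundedness of $\varphi$ and the Gaussian decay, and the observation that $\langle P(fe_n),e_m\rangle=\langle fe_n,e_m\rangle$ by self-adjointness of $P$.
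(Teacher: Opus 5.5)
Your proposal is correct and follows the paper's proof essentially verbatim: you compute the matrix entries in polar coordinates, observe that the angular integral kills the off-diagonal terms, and then substitute $t=r^2$ to get $\lambda_n$. The extra justifications you supply (Fubini, $\langle P(fe_n),e_m\rangle=\langle fe_n,e_m\rangle$, and expanding $T_fe_n$ in the orthonormal basis) are points the paper leaves implicit, and they are all sound.
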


\begin{proof}
For $m,n\ge 0$,
\[
\langle T_f e_m,e_n\rangle
=\frac{1}{\pi\sqrt{m!n!}}
\int_{\mathbb C} f(w)w^m\overline w^n e^{-|w|^2}\,dA(w).
\]
In polar coordinates $w=re^{i\theta}$,
\[
\langle T_f e_m,e_n\rangle
=\frac{1}{\pi\sqrt{m!n!}}
\int_0^\infty\int_0^{2\pi}
\varphi(r)r^{m+n}e^{i(m-n)\theta}e^{-r^2}r\,d\theta\,dr.
\]
The angular integral is $0$ when $m\ne n$, so $T_f$ is diagonal. For $m=n$,
\[
\lambda_n
=\frac{2}{n!}\int_0^\infty \varphi(r)e^{-r^2}r^{2n+1}\,dr.
\]
The change of variables $t=r^2$ gives
\[
\lambda_n
=\frac1{n!}\int_0^\infty \varphi(\sqrt t)e^{-t}t^n\,dt.
\]
\end{proof}

\subsection{A radial counterexample on the Fock space}

Consider the bounded real-valued radial symbol
\[
f(z)=\frac12+\cos(2|z|), \qquad z\in\mathbb C.
\]

\begin{theorem}\label{thm:fock-counterexample}
For the symbol $f(z)=\frac12+\cos(2|z|)$,
\[
\liminf_{|z|\to\infty}\widetilde f(z)=\frac12-e^{-1}>0,
\]
but $T_f$ is not essentially positive.
\end{theorem}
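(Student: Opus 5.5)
The plan is to compute the leading-order asymptotics of both $\widetilde f(z)$ and the eigenvalues $\lambda_n$ from Lemma~\ref{lem:radial-diagonal}. In each case the oscillatory part $\cos(2|\cdot|)$ gets averaged against an approximately Gaussian fluctuation of the radius, and the two fluctuations have different variances. Throughout, the only error tool is the elementary bound $|\cos a-\cos b|\le |a-b|$, which lets me replace the exact phase by its linearization at the cost of a controlled remainder.

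\emph{Berezin transform.} Write $w=z+u$, so that
\[
\widetilde f(z)=\tfrac12+\tfrac1\pi\int_{\mathbb C}\cos(2|z+u|)e^{-|u|^2}\,dA(u).
\]
For $z\neq 0$ set $x=\mathrm{Re}(u\overline z)/|z|$. Expanding the modulus gives $\bigl||z+u|-|z|-x\bigr|\le |u|^2/|z|$, since $|z+u|-|z|-x=(|u|^2-x^2)/(|z+u|+|z|+x)$. On the region $|u|\le|z|/2$ this yields an error $O(1/|z|)$ after integrating against the Gaussian, and the complementary region contributes $O(e^{-|z|^2/4})$. The main term is therefore
\[
\frac1\pi\int\cos(2|z|+2x)e^{-|u|^2}\,dA(u)=\mathrm{Re}\,\bigl(e^{2i|z|}\,\mathbb E[e^{2iX}]\bigr),
\]
where $X$ is the one-dimensional marginal of the Gaussian, with density $\pi^{-1/2}e^{-x^2}$ and variance $1/2$. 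Its characteristic function at $2$ is $e^{-1}$. Hence
\[
\widetilde f(z)=\tfrac12+e^{-1}\cos(2|z|)+O(|z|^{-1}),
\]
and letting $|z|$ run through the values with $\cos(2|z|)=-1$ gives $\liminf_{|z|\to\infty}\widetilde f(z)=\tfrac12-e^{-1}>0$.

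\emph{Eigenvalues.} By Lemma~\ref{lem:radial-diagonal},
\[
\lambda_n=\tfrac12+\mathbb E\bigl[\cos(2\sqrt{T_n})\bigr],\qquad T_n\sim\mathrm{Gamma}(n+1,1).
\]
I linearize $\sqrt t\approx\sqrt n+(t-n)/(2\sqrt n)$. The remainder is bounded by $C(t-n)^2/n^{3/2}$, which has expectation $O(n^{-1/2})$ since $\mathbb E(T_n-n)^2=n+2$. Near $t=0$ I use the cruder bound $|\sqrt t-\sqrt n|\le |t-n|/\sqrt n$ together with Chebyshev, which costs only $O(n^{-1/2})$. The linearized term is again the real part of
\[
e^{2i\sqrt n}\,\mathbb E\bigl[e^{i(T_n-n)/\sqrt n}\bigr]=e^{2i\sqrt n}\,e^{-i\sqrt n}\,(1-i/\sqrt n)^{-(n+1)}.
\]
This has an exact closed form, and a direct expansion of the logarithm shows $e^{-i\sqrt n}(1-i/\sqrt n)^{-(n+1)}\to e^{-1/2}$. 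Therefore
\[
\lambda_n=\tfrac12+e^{-1/2}\cos(2\sqrt n)+o(1).
\]
Because $2\sqrt{n+1}-2\sqrt n\to0$ while $2\sqrt n\to\infty$, the phases $2\sqrt n \bmod 2\pi$ are dense in $[0,2\pi)$. So I can pick $n_k\to\infty$ with $\cos(2\sqrt{n_k})\to-1$, which gives $\lambda_{n_k}\to\tfrac12-e^{-1/2}<0$ (numerically about $-0.107$).

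\emph{Conclusion.} Set $\mu=\tfrac12-e^{-1/2}$. The vectors $e_{n_k}$ are orthonormal, and $\|(T_f-\mu)e_{n_k}\|=|\lambda_{n_k}-\mu|\to0$. They thus form a singular Weyl sequence, so $\mu\in\sigma_{\mathrm{ess}}(T_f)$ and $T_f$ is not essentially positive.

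The main obstacle is making the two Taylor-remainder estimates uniform, especially for the Gamma average near $t=0$ and in its tails. The Berezin step is routine, and the characteristic-function limits are explicit.
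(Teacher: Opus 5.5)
Your proposal is correct and follows essentially the same route as the paper. For the eigenvalues, both you and the paper linearize the phase, bound the quadratic remainder by the Gamma second moment, and evaluate the main term exactly as $(1-is)^{-(n+1)}$. For the Berezin transform, both linearize $|z+u|$ along $z/|z|$ to obtain the Gaussian factor $e^{-1}$, and both conclude with a Weyl sequence of basis vectors.

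The differences are cosmetic: you center at $\sqrt n$ rather than $\sqrt{n+1}$, and you give an explicit $O(|z|^{-1})$ error where the paper uses dominated convergence. The issue near $t=0$ that you flag as the main obstacle is not one. The exact remainder $(t-n)^2/\bigl(2\sqrt n(\sqrt t+\sqrt n)^2\bigr)$ is already bounded by $(t-n)^2/(2n^{3/2})$ for all $t\ge0$, so no Chebyshev step is needed.
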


The proof is divided into two asymptotic formulas.

\begin{proposition}\label{prop:eigenvalue-asymptotic}
Let $(\lambda_n)_{n\ge 0}$ be the eigenvalue sequence from Lemma \ref{lem:radial-diagonal}. Then
\[
\lambda_n
=\frac12+e^{-1/2}\cos(2\sqrt{n+1})+o(1)
\qquad (\text{as } n\to\infty).
\]
Thus
\[
\liminf_{n\to\infty}\lambda_n
=\frac12-e^{-1/2}<0.
\]
\end{proposition}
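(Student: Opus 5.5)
By Lemma~\ref{lem:radial-diagonal},
\[
\lambda_n=\frac12+\frac{1}{n!}\int_0^\infty \cos(2\sqrt t)\,e^{-t}t^n\,dt .
\]
It therefore suffices to show that
\[
I_n:=\frac{1}{n!}\int_0^\infty \cos(2\sqrt t)\,e^{-t}t^n\,dt = e^{-1/2}\cos\bigl(2\sqrt{n+1}\bigr)+o(1).
\]
The weight $e^{-t}t^n/n!$ is the Gamma$(n+1,1)$ density, with mean $n+1$ and variance $n+1$. We write $t=n+1+s\sqrt{n+1}$ and linearize the phase:
\[
2\sqrt t = 2\sqrt{n+1}\,\sqrt{1+s/\sqrt{n+1}} = 2\sqrt{n+1}+s-\frac{s^2}{4\sqrt{n+1}}+\cdots .
\]
On the bulk $|s|\le (n+1)^{1/8}$, say, this gives $2\sqrt t=2\sqrt{n+1}+s+o(1)$ uniformly. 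By the central limit theorem for Gamma variables (or directly by Stirling/Laplace), the rescaled density converges to the standard Gaussian, in fact in $L^1$. Heuristically this yields
\[
I_n\approx \mathrm{Re}\Bigl(e^{2i\sqrt{n+1}}\,\mathbb E\bigl[e^{iZ}\bigr]\Bigr)=e^{-1/2}\cos\bigl(2\sqrt{n+1}\bigr),
\]
with $Z\sim N(0,1)$.

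To make this rigorous, we split $I_n$ into the bulk and the tails.
\begin{itemize}
\item The tail mass $\{|s|>(n+1)^{1/8}\}$ tends to $0$ by Chebyshev's inequality, since the variance in $s$ is $1$. Because $|\cos|\le 1$, the tail contributes $o(1)$.
\item On the bulk, we replace $\cos(2\sqrt t)$ by $\cos(2\sqrt{n+1}+s)$. The error is bounded by the uniform phase error, $O\bigl((n+1)^{1/4}/\sqrt{n+1}\bigr)=o(1)$.
\item We then replace the rescaled Gamma density by the Gaussian density, with $L^1$ error $o(1)$ by Scheff\'e's lemma applied to the pointwise local limit from Stirling's formula.
\item Finally, $\int \cos(a+s)\,\phi(s)\,ds=e^{-1/2}\cos a$ for the standard normal density $\phi$, and the Gaussian tail outside the bulk is negligible.
\end{itemize}
Since all error terms are uniform in the phase $a=2\sqrt{n+1}$, the first claim follows.

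An alternative, cleaner route avoids the probabilistic language. We write $\cos(2\sqrt t)=\mathrm{Re}\, e^{2i\sqrt t}$ and compute the characteristic-type quantity $\mathbb E\bigl[e^{2i\sqrt{T}}\bigr]$ for $T\sim\Gamma(n+1)$ via the substitution $t=r^2$ and Laplace's method around $r=\sqrt{n+1/2}$. We would use this route only if the CLT bookkeeping proves awkward. The main obstacle is obtaining the error uniformly despite the phase $2\sqrt{n+1}\to\infty$. This is handled by the linearization above: the phase is only ever treated as a fixed shift $a$, so nothing depends on its size.

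For the liminf, $2\sqrt{n+1}$ increases to infinity with increments $2(\sqrt{n+2}-\sqrt{n+1})\to0$, so its values modulo $2\pi$ are dense in $[0,2\pi)$. In particular, there are $n_k\to\infty$ with $\cos(2\sqrt{n_k+1})\to -1$. Combined with $\cos\ge -1$, this gives
\[
\liminf_{n\to\infty}\lambda_n=\frac12-e^{-1/2}\approx -0.107<0 .
\]
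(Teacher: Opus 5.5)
Your proof is correct, but it takes a different route from the paper. Both arguments use the same linearization $2\sqrt t\approx 2a+(t-a^2)/a$ with $a=\sqrt{n+1}$; your $s$ is exactly $(t-a^2)/a$. The difference is that the paper never approximates the Gamma weight. It uses the exact identity $2\sqrt t=2a+\frac{t-a^2}{a}-\frac{(\sqrt t-a)^2}{a}$. The main term is then the Gamma characteristic function in closed form, $M_n=e^{ia}(1-i/a)^{-a^2}=e^{-1/2}e^{2ia}+o(1)$. The remainder is controlled on the whole half-line at once via $|e^{iu}-1|\le|u|$ and $(\sqrt t-a)^2\le (t-a^2)^2/a^2$. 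The exact variance then gives $|R_n|\le a^{-3}\cdot a^2=1/a$.

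That single second-moment computation replaces your bulk/tail split, Chebyshev, Stirling, and Scheff\'e. It also yields an explicit rate $O(n^{-1/2})$, which your argument as written does not give, since Scheff\'e's lemma carries no rate. Your approach, in turn, is more robust: it needs only that the rescaled weight becomes Gaussian in $L^1$, not an explicit Fourier transform.

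Even within your framework, the local limit theorem is more than you need. Because $\cos(a+s)=\Re\bigl(e^{ia}e^{is}\bigr)$, the step where you swap the Gamma density for the Gaussian, uniformly in $a$, only requires $\mathbb E\bigl[e^{iS}\bigr]\to e^{-1/2}$ for $S=(T-a^2)/a$. That is convergence of a characteristic function at one frequency, which follows from the ordinary CLT, and it is exactly the quantity the paper evaluates in closed form. The liminf step is essentially the paper's: you use increments tending to $0$, while the paper picks explicit $n_k$ and applies the mean value theorem.
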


\begin{proof}
Lemma \ref{lem:radial-diagonal} gives
\[
\lambda_n
=\frac12+\Re I_n,
\qquad
I_n
=\frac1{n!}\int_0^\infty e^{-t}t^n e^{2i\sqrt t}\,dt.
\]
Set
\[
a=\sqrt{n+1}.
\]
Perform a second-order Taylor expansion of 2$\sqrt t$ around $t = a^2$ with exact remainder 
\[
2\sqrt t
=
2a+\frac{t-a^2}{a}-\frac{(\sqrt t-a)^2}{a}
\]
or, alternatively, view this as an algebraic identity. Hence
\[
I_n=M_n+R_n,
\]
where
\[
M_n
=e^{2ia}\frac1{n!}\int_0^\infty e^{-t}t^n e^{i(t-a^2)/a}\,dt
\]
and
\[
R_n
=e^{2ia}\frac1{n!}\int_0^\infty e^{-t}t^n e^{i(t-a^2)/a}
\left(e^{-i(\sqrt t-a)^2/a}-1\right)\,dt.
\]

We first estimate $R_n$. The bound $|e^{iu}-1|\le |u|$ gives
\[
|R_n|
\le\frac1{a\,n!}\int_0^\infty e^{-t}t^n(\sqrt t-a)^2\,dt.
\]
Since
\[
(\sqrt t-a)^2
=\frac{(t-a^2)^2}{(\sqrt t+a)^2}
\le\frac{(t-a^2)^2}{a^2},
\]
it follows that
\[
|R_n|
\le\frac1{a^3n!}\int_0^\infty e^{-t}t^n(t-a^2)^2\,dt.
\]
The last integral can be computed directly:
\[
\frac1{n!}\int_0^\infty e^{-t}t^n(t-a^2)^2\,dt
=
\frac{(n+2)!}{n!}
-2a^2\frac{(n+1)!}{n!}
+a^4.
\]
Because $a^2=n+1$, this equals $(n+2)(n+1)-2(n+1)^2+(n+1)^2=n+1=a^2$.
Thus
\[
|R_n|\le \frac1a\to 0.
\]

For the main term,
\[
M_n
=e^{2ia}e^{-ia}\frac1{n!}\int_0^\infty e^{-(1-i/a)t}t^n\,dt
=e^{ia}(1-i/a)^{-(n+1)} = e^{ia}(1-i/a)^{-a^2}.
\]
Using the principal branch of the logarithm,
\[
M_n=\exp \big(ia-a^2\log(1-i/a)\big).
\]
The expansion
\[
\log(1-z)=-z-\frac{z^2}{2}+O(|z|^3)
\qquad (z\to 0)
\]
gives
\[
\log(1-i/a)
=-\frac{i}{a}+\frac{1}{2a^2}+O(a^{-3}),
\]
and therefore
\[
ia-a^2\log(1-i/a)
=2ia-\frac12+O(a^{-1}).
\]
Hence
\[
M_n=e^{-1/2}e^{2ia}+o(1).
\]
Since $R_n=o(1)$, we obtain
\[
I_n=e^{-1/2}e^{2i\sqrt{n+1}}+o(1),
\]
so
\[
\lambda_n
=\frac12+e^{-1/2}\cos(2\sqrt{n+1})+o(1).
\]

This implies
\[
\liminf_{n\to\infty}\lambda_n\ge \frac12-e^{-1/2}.
\]
To get the reverse inequality, define
\[
x_k=\left(\frac{(2k+1)\pi}{2}\right)^2-1,
\qquad
n_k=\lfloor x_k\rfloor.
\]
Then $n_k\to\infty$, and the mean value theorem for the function $f(x)=2\sqrt{x+1}$ yields
\[
\big|2\sqrt{n_k+1}-(2k+1)\pi\big|
\le
\sup_{x\in[n_k,x_k]}\frac1{\sqrt{x+1}}
\to 0.
\]
Hence $\cos(2\sqrt{n_k+1})\to -1$ and along this subsequence,
\[
\lambda_{n_k}\to \frac12-e^{-1/2}.
\]
Therefore
\[
\liminf_{n\to\infty}\lambda_n=\frac12-e^{-1/2}<0.
\]
\end{proof}

\begin{proposition}\label{prop:berezin-asymptotic}
The Berezin transform of $f$ satisfies
\[
\widetilde f(z)
=\frac12+e^{-1}\cos(2|z|)+o(1)
\qquad (\text{as } |z|\to\infty).
\]
Thus
\[
\liminf_{|z|\to\infty}\widetilde f(z)
=\frac12-e^{-1}>0.
\]
\end{proposition}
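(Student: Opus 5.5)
Write $\widetilde f(z)=\frac12+\Re J(z)$ with
\[
J(z)=\frac1\pi\int_{\mathbb C}e^{-|w-z|^2}e^{2i|w|}\,dA(w).
\]
By rotation invariance we may take $z=R>0$. The plan is to linearize the phase $2|w|$ around $w=R$, much as $2\sqrt t$ was linearized in Proposition~\ref{prop:eigenvalue-asymptotic}. Write $w=R+u$ with $u=x+iy$. Then
\[
|w|=\sqrt{(R+x)^2+y^2}=R+x+\frac{y^2}{2R}+E(R,u),
\]
where the error $E(R,u)$ is small when $|u|\ll R$.

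Accordingly, split $J(R)=M(R)+\mathcal R(R)$. The main term is
\[
M(R)=e^{2iR}\frac1\pi\int_{\mathbb C}e^{-x^2-y^2}e^{2ix}\,dA(u).
\]
This is a Gaussian Fourier integral, equal to $e^{2iR}e^{-1}$: the $x$-integral gives $\sqrt\pi\,e^{-1}$ and the $y$-integral gives $\sqrt\pi$. This is exactly where the damping factor $e^{-1}$ comes from, in contrast to the factor $e^{-1/2}$ in the eigenvalue asymptotics. The remainder is
\[
\mathcal R(R)=\frac1\pi\int e^{-|u|^2}e^{2i(R+x)}\bigl(e^{2i(|R+u|-R-x)}-1\bigr)\,dA(u).
\]
Using $|e^{iv}-1|\le|v|$, it suffices to show
\[
\int e^{-|u|^2}\bigl|\,|R+u|-(R+x)\bigr|\,dA(u)\to0.
\]

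The key pointwise estimate is
\[
0\le |R+u|-(R+x)=\frac{y^2}{|R+u|+R+x}.
\]
When $|u|\le R/2$, the denominator is at least $R$, so the integrand is at most $y^2/R$, which integrates against the Gaussian to $O(1/R)$. When $|u|>R/2$, the crude triangle-inequality bound $\bigl||R+u|-R-x\bigr|\le 2|u|$ holds, and the Gaussian tail gives $O(e^{-R^2/8}R)$. Hence $\mathcal R(R)=O(1/R)$, and therefore
\[
\widetilde f(z)=\tfrac12+e^{-1}\cos(2|z|)+o(1).
\]
Since the bound is uniform in the argument of $z$, this holds as $|z|\to\infty$.

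The liminf statement then follows directly from the asymptotic. The lower bound $\liminf\widetilde f\ge\frac12-e^{-1}$ is immediate. For equality, take $z_k=(2k+1)\pi/2$ real, so that $\cos(2|z_k|)=-1$ exactly and no subsequence approximation is needed. The main obstacle is controlling the remainder uniformly. The one subtle point there is that near $w=0$ the function $|w|$ is not smooth, but the case split $|u|\le R/2$ versus $|u|>R/2$ handles this, since that region carries only Gaussian-tail mass.
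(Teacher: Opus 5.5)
Your proof is correct and follows the paper's argument. Like the paper, you reduce to $z=R>0$, translate $w=R+u$, replace the phase $|R+u|-R$ by $x=\Re u$, evaluate the Gaussian Fourier integral to get $e^{-1}$, and use the same points $z_k=(2k+1)\pi/2$ for the liminf. The one difference is the remainder: the paper uses dominated convergence (the integrand is bounded by $\pi^{-1}e^{-|u|^2}$ and $|R+u|-R\to x$ pointwise), whereas your case split $|u|\le R/2$ versus $|u|>R/2$ gives an explicit $O(1/R)$ rate, which is more than needed but valid.
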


\begin{proof}
Since $f$ is radial, $\widetilde f$ is radial as well. Indeed, for $\theta\in\mathbb R$,
\[
\widetilde f(e^{i\theta}z)
=\frac1\pi\int_{\mathbb C} f(w)e^{-|w-e^{i\theta}z|^2}\,dA(w)
=\frac1\pi\int_{\mathbb C} f(e^{i\theta}u)e^{-|u-z|^2}\,dA(u)
=\widetilde f(z).
\]
It is therefore enough to consider $z=s>0$ real.

Write $w=s+x+iy$. Then $dA(w)=dx\,dy$ and $|w-s|^2=x^2+y^2$. Hence
\[
\widetilde f(s)
=\frac12+\Re \big(e^{2is}J_s\big),
\]
where
\[
J_s
=\frac1\pi\int_{\mathbb R^2}
e^{-x^2-y^2}
e^{2i(\sqrt{(s+x)^2+y^2}-s)}
\,dx\,dy.
\]

Fix $x,y\in\mathbb R$. Then
\[
\sqrt{(s+x)^2+y^2}-s
=\frac{2sx+x^2+y^2}{\sqrt{(s+x)^2+y^2}+s}
\longrightarrow x
\qquad (\text{as } s\to\infty).
\]
The integrand is bounded in absolute value by $\frac1\pi e^{-x^2-y^2},$ which is integrable on $\mathbb R^2$. By dominated convergence,
\[
J_s\to
\frac1\pi\int_{\mathbb R^2}e^{-x^2-y^2}e^{2ix}\,dx\,dy.
\]
The $y$-integral is $\sqrt\pi$. For the $x$-integral, define
\[
F(t)=\int_{\mathbb R} e^{-x^2}\cos(tx)\,dx.
\]
Differentiating under the integral sign and integrating by parts,
\[
F'(t)
=-\int_{\mathbb R} x e^{-x^2}\sin(tx)\,dx
=-\frac t2\int_{\mathbb R} e^{-x^2}\cos(tx)\,dx
=-\frac t2 F(t).
\]
Since $F(0)=\sqrt\pi$, 
\[
F(t)=\sqrt\pi e^{-t^2/4}.
\]
The sine integral vanishes by oddness, so $\int_{\mathbb R} e^{-x^2}e^{itx}\,dx=F(t)$, which evaluates to $\sqrt{\pi}/e$ at $t=2$. Thus
\[
\frac1\pi\int_{\mathbb R^2}e^{-x^2-y^2}e^{2ix}\,dx\,dy=e^{-1}.
\]
Thus $J_s\to e^{-1}$, and
\[
\widetilde f(s)
=\frac12+e^{-1}\cos(2s)+o(1).
\]

This yields
\[
\liminf_{s\to\infty}\widetilde f(s)\ge \frac12-e^{-1}.
\]
For the reverse inequality, take
\[
s_k=\frac{(2k+1)\pi}{2}.
\]
Then $\cos(2s_k)=-1$, so
\[
\widetilde f(s_k)\to \frac12-e^{-1}.
\]
Hence
\[
\liminf_{|z|\to\infty}\widetilde f(z)
= \liminf_{s\to\infty}\widetilde f(s)
= \frac12-e^{-1}>0.
\]
\end{proof}

\begin{proof}[Proof of Theorem \ref{thm:fock-counterexample}]
Proposition \ref{prop:berezin-asymptotic} gives
\[
\liminf_{|z|\to\infty}\widetilde f(z)=\frac12-e^{-1}>0.
\]

By Proposition \ref{prop:eigenvalue-asymptotic}, there exists a sequence $n_k\to\infty$ such that
\[
\lambda_{n_k}\to \frac12-e^{-1/2}<0.
\]
Since $T_f e_n=\lambda_n e_n$, we have
\[
(T_f-(1/2-e^{-1/2})I)e_{n_k}\to 0.
\]
The vectors $e_{n_k}$ are orthonormal, hence they converge weakly to $0$. Weyl's criterion now gives
\[
\frac12-e^{-1/2}\in \sigma_{\mathrm{ess}}(T_f).
\]
This point of the essential spectrum is negative, so $T_f$ is not essentially positive.
\end{proof}

\begin{remark}\label{rem:fock-param}
The proofs of Propositions~\ref{prop:eigenvalue-asymptotic} and~\ref{prop:berezin-asymptotic} go through with $\beta$ in place of $2$ and $c$ in place of $\tfrac{1}{2}$, giving the following. For\[
f_{\beta,c}(z) = c + \cos(\beta|z|), \qquad \beta > 0,
\]
one obtains
\[
\lambda_n
=c+e^{-\beta^2/8}\cos(\beta\sqrt{n+1})+o(1)
\qquad (\text{as } n\to\infty),
\]
and
\[
\widetilde f_{\beta,c}(z)
=
c+e^{-\beta^2/4}\cos(\beta|z|)+o(1)
\qquad (\text{as } |z|\to\infty).
\]
Any choice of $c$ satisfying
\[
e^{-\beta^2/4}<c<e^{-\beta^2/8}
\]
gives a radial counterexample. Thus the set of counterexamples is open in the $(\beta,c)$ parameter space.
\end{remark}

\section{Bergman space}
\subsection{Radial Toeplitz operators on the Bergman space}

Let $dA$ denote Lebesgue measure on the disk $\mathbb D$. We work on the standard Bergman space
\[
A^2(\mathbb D)
=
\left\{
h\in \operatorname{Hol}(\mathbb D):
\|h\|_{A^2}^2
:=
\frac1\pi\int_{\mathbb D}|h(z)|^2\,dA(z)
<\infty
\right\}.
\]
For $f\in L^\infty(\mathbb D)$, let $T_f$ be the Toeplitz operator
\[
T_f h=P(fh),
\]
where $P$ is the orthogonal projection from
\[
L^2 \left(\mathbb D,\pi^{-1}dA\right)
\]
onto $A^2(\mathbb D)$.

The standard orthonormal basis is
\[
e_n(z)=\sqrt{n+1}\,z^n,
\qquad n\ge 0.
\]
The reproducing kernel is
\[
K_z(w)=\frac{1}{(1-w\overline z)^2},
\]
and the normalized reproducing kernel at $z\in\mathbb D$ is
\[
k_z(w)=\frac{1-|z|^2}{(1-w\overline z)^2}.
\]
The Berezin transform of $f$ is
\[
\widetilde f(z)
=\langle T_f k_z,k_z\rangle
=
\frac1\pi\int_{\mathbb D}
f(w)\frac{(1-|z|^2)^2}{|1-w\overline z|^4}\,dA(w).
\]
If $f$ is real-valued, then $T_f$ is self-adjoint. As in the Fock space case, we say that $T_f$ is essentially positive if
\[
\sigma_{\mathrm{ess}}(T_f)\subset [0,\infty).
\]

\begin{lemma}\label{lem:bergman-radial}
Let $f(z)=\varphi(|z|^2)$ be a bounded radial symbol. Then $T_f$ is diagonal with respect to the basis $(e_n)_{n\ge 0}$, and
\[
T_f e_n=\lambda_n e_n,
\qquad
\lambda_n
=(n+1)\int_0^1 \varphi(t)t^n\,dt.
\]
Moreover, $\widetilde f$ is radial, and for $0\le a<1$,
\[
\widetilde f(a)
=(1-a^2)^2
\int_0^1
\frac{1+a^2 t}{(1-a^2 t)^3}\,\varphi(t)\,dt.
\]
\end{lemma}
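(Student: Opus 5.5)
The plan mirrors Lemma~\ref{lem:radial-diagonal}, in two parts: first the diagonal structure, then the Berezin formula.

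\textbf{Diagonal structure.} For $m,n\ge0$ I write
\[
\langle T_f e_m,e_n\rangle=\frac{\sqrt{(m+1)(n+1)}}{\pi}\int_{\mathbb D}\varphi(|w|^2)\,w^m\overline w^n\,dA(w)
\]
and pass to polar coordinates $w=re^{i\theta}$. The angular integral of $e^{i(m-n)\theta}$ vanishes unless $m=n$, so $T_f$ is diagonal. For $m=n$ this leaves
\[
\lambda_n=2(n+1)\int_0^1\varphi(r^2)r^{2n+1}\,dr,
\]
and the substitution $t=r^2$ turns it into $(n+1)\int_0^1\varphi(t)t^n\,dt$.

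\textbf{Radiality of $\widetilde f$.} The weight $|1-w\overline z|$ is invariant under $(w,z)\mapsto(e^{i\theta}w,e^{i\theta}z)$, and $dA$ is rotation invariant. So the same change of variables as in the proof of Proposition~\ref{prop:berezin-asymptotic} gives $\widetilde f(e^{i\theta}z)=\widetilde f(z)$, and it suffices to take $z=a\in[0,1)$.

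\textbf{Berezin formula.} For $w=re^{i\theta}$ the angular integral to compute is
\[
\frac1{2\pi}\int_0^{2\pi}\frac{d\theta}{|1-are^{i\theta}|^4}.
\]
I would do this via the series
\[
(1-a w)^{-2}=\sum_k (k+1)a^k w^k,
\]
so that Parseval gives $\sum_k (k+1)^2 x^k$ with $x=a^2r^2$. Summing the series yields
\[
\sum_k (k+1)^2x^k=\frac{1+x}{(1-x)^3},
\]
which is obtained by differentiating $x\sum_k (k+1)x^{k}=x/(1-x)^2$.

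\textbf{Assembly.} Putting this into the Berezin integral gives
\[
\widetilde f(a)=\frac{(1-a^2)^2}{\pi}\int_0^1\varphi(r^2)\,2\pi r\,\frac{1+a^2r^2}{(1-a^2r^2)^3}\,dr.
\]
The substitution $t=r^2$ (so $2r\,dr=dt$) then gives exactly the stated formula.

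\textbf{Main obstacle.} The only nontrivial step is evaluating the angular integral in closed form. The power-series/Parseval route avoids residues and needs only absolute convergence, which holds for $ar<1$. Interchanging the angular integral with $\varphi$ is justified by Fubini, since $\varphi$ is bounded and the kernel is integrable for fixed $a<1$.
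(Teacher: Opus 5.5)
Your proposal is correct and follows essentially the same route as the paper: polar coordinates kill the off-diagonal entries, and the angular average of $|1-are^{i\theta}|^{-4}$ is computed by expanding $(1-are^{i\theta})^{-2}$ as a power series. The paper multiplies out the resulting double series and averages in $\theta$, which is exactly your Parseval step, and then substitutes $t=r^2$; your explicit rotation-invariance and Fubini remarks only spell out what the paper leaves implicit.
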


\begin{proof}
For $m,n\ge 0$,
\[
\langle T_f e_m,e_n\rangle
=
\frac{\sqrt{(m+1)(n+1)}}{\pi}
\int_{\mathbb D}
f(w)w^m\overline w^n\,dA(w).
\]
In polar coordinates $w=re^{i\theta}$,
\[
\langle T_f e_m,e_n\rangle
=
\frac{\sqrt{(m+1)(n+1)}}{\pi}
\int_0^1\int_0^{2\pi}
\varphi(r^2)r^{m+n}e^{i(m-n)\theta}r\,d\theta\,dr.
\]
The angular integral is $0$ when $m\ne n$, so $T_f$ is diagonal. For $m=n$,
\[
\lambda_n=2(n+1)\int_0^1 \varphi(r^2)r^{2n+1}\,dr=(n+1)\int_0^1 \varphi(t)t^n\,dt
\]with the latter by the change of variables $t=r^2$.

Since $f$ is radial, $\widetilde f$ is radial as well. It is therefore enough to take $z=a\in[0,1)$ real. Then
\[
\widetilde f(a)
=
\frac{(1-a^2)^2}{\pi}
\int_0^1\int_0^{2\pi}
\frac{\varphi(r^2)}{|1-ar e^{-i\theta}|^4}r\,d\theta\,dr.
\]
For $0\le \rho<1$,
\[
\frac1{|1-\rho e^{i\theta}|^4}
=
\frac1{(1-\rho e^{i\theta})^2(1-\rho e^{-i\theta})^2}
=
\sum_{m,n\ge 0}(m+1)(n+1)\rho^{m+n}e^{i(m-n)\theta}.
\]
Averaging in $\theta$ yields
\[
\frac1{2\pi}\int_0^{2\pi}\frac{d\theta}{|1-\rho e^{i\theta}|^4}
=\sum_{n\ge 0}(n+1)^2\rho^{2n}=\frac{1+\rho^2}{(1-\rho^2)^3}.
\]
Applying this with $\rho=ar$, we obtain
\[
\widetilde f(a)=2(1-a^2)^2
\int_0^1
\frac{1+a^2r^2}{(1-a^2r^2)^3}\,\varphi(r^2)\,r\,dr.
\]
Now set $t=r^2$. Then
\[
\widetilde f(a)=(1-a^2)^2
\int_0^1
\frac{1+a^2 t}{(1-a^2 t)^3}\,\varphi(t)\,dt.
\]
\end{proof}

\subsection{A radial counterexample on the Bergman space}

On the Bergman space, the analogue of the Fock example is to oscillate at the boundary scale $1-|z|^2$, not at the Euclidean scale $|z|$. Replacing Euclidean oscillation with oscillation at the hyperbolic boundary scale $1-|z|^2$ follows the standard dictionary between the two spaces.

Consider the bounded real-valued radial symbol
\[
f(z)=\frac12+\cos \left(\log \frac1{1-|z|^2}\right),
\qquad z\in\mathbb D.
\]
Equivalently,
\[
f(z)=\frac12+\Re (1-|z|^2)^{-i},
\]
where for $u>0$ we write
\[
u^{-i}=e^{-i\log u}.
\]
All occurrences of $u^{-i}$ and $(1-t)^{-i}$ below are understood for $u>0$ and $0<t<1$; the endpoint values are irrelevant for the integrals.

\begin{theorem}\label{thm:bergman-counterexample}
For the symbol
\[
f(z)=\frac12+\cos \left(\log \frac1{1-|z|^2}\right),
\]
one has
\[
\liminf_{|z|\to 1}\widetilde f(z)=\frac12-\sqrt2\,\frac{\pi}{\sinh\pi} >0,
\]
but $T_f$ is not essentially positive.
\end{theorem}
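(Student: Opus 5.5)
The plan mirrors the Fock case: compute the eigenvalue sequence and the Berezin transform of the oscillatory part exactly via Beta integrals, extract their asymptotics, and compare the two attenuation factors. Write $\varphi(t)=\frac12+\Re(1-t)^{-i}$, so that $f(z)=\varphi(|z|^2)$, and apply Lemma~\ref{lem:bergman-radial}.

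\emph{Eigenvalues.} The eigenvalue formula gives
\[
\lambda_n=\tfrac12+\Re\Big[(n+1)\int_0^1 t^n(1-t)^{-i}\,dt\Big]=\tfrac12+\Re\Big[(n+1)B(n+1,1-i)\Big].
\]
In Gamma functions this reads
\[
\lambda_n=\tfrac12+\Re\frac{\Gamma(1-i)\,\Gamma(n+2)}{\Gamma(n+2-i)}.
\]
The standard ratio asymptotic $\Gamma(x+\alpha)/\Gamma(x+\beta)=x^{\alpha-\beta}(1+O(1/x))$ with $x=n+2$ yields
\[
\lambda_n=\tfrac12+\Re\big(\Gamma(1-i)\,(n+2)^{i}\big)+o(1).
\]
The reflection formula gives $|\Gamma(1-i)|^2=\Gamma(1-i)\Gamma(1+i)=\pi/\sinh\pi$. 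Writing $\Gamma(1-i)=\sqrt{\pi/\sinh\pi}\,e^{i\theta_0}$, we get
\[
\lambda_n=\tfrac12+\sqrt{\pi/\sinh\pi}\,\cos\!\big(\log(n+2)+\theta_0\big)+o(1).
\]
Since $\log(n+2)$ increases by steps tending to $0$, the phase $\log(n+2)+\theta_0$ comes arbitrarily close to odd multiples of $\pi$ along a subsequence $n_k$. Along it, $\lambda_{n_k}\to\frac12-\sqrt{\pi/\sinh\pi}$. Numerically $\sqrt{\pi/\sinh\pi}\approx0.52>\frac12$, so this limit is negative. Weyl's criterion, applied to the orthonormal vectors $e_{n_k}$ exactly as in the proof of Theorem~\ref{thm:fock-counterexample}, puts this negative point in $\sigma_{\mathrm{ess}}(T_f)$.

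\emph{Berezin transform.} Set $\rho=a^2$ and $\delta=1-\rho$. By Lemma~\ref{lem:bergman-radial}, together with $\delta^2\int_0^1(1+\rho t)(1-\rho t)^{-3}dt=1$ (the Berezin transform of $1$), we have
\[
\widetilde f(a)=\tfrac12+\Re\Big[\delta^2\int_0^1\frac{1+\rho t}{(1-\rho t)^3}(1-t)^{-i}\,dt\Big].
\]
Substitute $1-t=\delta s$, so $1-\rho t=\delta(1+\rho s)$ and $1+\rho t=2-\delta(1+\rho s)$. The bracket then becomes
\[
\delta^{-i}\int_0^{1/\delta}\frac{2-\delta(1+\rho s)}{(1+\rho s)^3}\,s^{-i}\,ds.
\]
As $\delta\to0$, dominated convergence applies with majorant $C(1+s/2)^{-3}+\delta(1+s/2)^{-2}\mathbf 1_{s\le1/\delta}$, and the second piece has integral $O(\delta)$. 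The integral therefore tends to
\[
2\int_0^\infty\frac{s^{-i}}{(1+s)^3}\,ds=2B(1-i,2+i)=\Gamma(1-i)\Gamma(1+i)(1+i)=(1+i)\frac{\pi}{\sinh\pi}.
\]
Hence
\[
\widetilde f(a)=\tfrac12+\sqrt2\,\frac{\pi}{\sinh\pi}\cos\!\big(\log\tfrac1{1-a^2}-\tfrac\pi4\big)+o(1).
\]
Taking $a_k\to1$ with phase exactly an odd multiple of $\pi$ gives
\[
\liminf_{|z|\to1}\widetilde f(z)=\tfrac12-\sqrt2\,\frac{\pi}{\sinh\pi}.
\]
Numerically $\sqrt2\,\pi/\sinh\pi\approx0.385<\frac12$, so this is positive.

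\emph{Main obstacle.} I expect the delicate step to be the rigorous limit in the Berezin integral: the upper limit $1/\delta$ moves, and the numerator term $\delta(1+\rho s)$ is not uniformly small. The plan is to split that term off and bound it by $\delta\int_0^{1/\delta}(1+s/2)^{-2}\,ds=O(\delta)$, then apply dominated convergence to the remaining term. The Gamma-ratio asymptotic for complex shift is standard (Stirling), and the final numerical inequalities are routine checks.
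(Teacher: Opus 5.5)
Your proof is correct and is essentially the paper's: the same radial reduction via Lemma~\ref{lem:bergman-radial}, the same rescaling $1-t=\delta s$ with dominated convergence for the Berezin transform, the reflection formula $|\Gamma(1-i)|^2=\pi/\sinh\pi$, and Weyl's criterion. The differences are minor: you compute the eigenvalues exactly as $\Gamma(1-i)\Gamma(n+2)/\Gamma(n+2-i)$ plus a Gamma-ratio asymptotic, which is what the paper itself does in Proposition~\ref{prop:bergman-higher-dim}; you identify the Berezin constant directly as $2B(1-i,2+i)$ instead of using the paper's Fubini argument; and you have one harmless slip: since $\delta^{-i}=e^{i\log(1/\delta)}$ and $\arg(1+i)=\pi/4$, the phase in your final Berezin formula should be $\log\frac1{1-a^2}+\frac\pi4$, not $-\frac\pi4$, which does not affect the liminf.
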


The proof is divided into two asymptotic formulas.

\begin{proposition}\label{prop:bergman-eigenvalue-asymptotic}
Let $(\lambda_n)_{n\ge 0}$ be the eigenvalue sequence from Lemma \ref{lem:bergman-radial}, and set
\[
\alpha=\int_0^\infty e^{-u}u^{-i}\,du.
\]
Then
\[
\lambda_n=\frac12+\Re \big(\alpha (n+1)^i\big)+o(1)
\qquad (\text{as } n\to\infty).
\]
In particular,
\[
\liminf_{n\to\infty}\lambda_n=\frac12-|\alpha|.
\]
\end{proposition}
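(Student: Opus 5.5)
We will start from Lemma \ref{lem:bergman-radial} with $\varphi(t)=\frac12+\Re(1-t)^{-i}$, which gives
\[
\lambda_n=\frac12+\Re I_n,\qquad I_n=(n+1)\int_0^1 t^n(1-t)^{-i}\,dt .
\]
The first step is to evaluate $I_n$ exactly as a Beta integral:
\[
I_n=(n+1)B(n+1,1-i)=\frac{(n+1)\,\Gamma(n+1)\Gamma(1-i)}{\Gamma(n+2-i)}=\frac{\Gamma(n+2)\Gamma(1-i)}{\Gamma(n+2-i)} .
\]
Next we identify $\alpha$: by definition $\alpha=\int_0^\infty e^{-u}u^{-i}\,du=\Gamma(1-i)$. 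This reduces the whole proposition to the ratio asymptotic
\[
\frac{\Gamma(N+1)}{\Gamma(N+1-i)}=N^{i}\,(1+o(1)),\qquad N=n+1\to\infty .
\]

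For the ratio asymptotic there are two routes. The first is Stirling's formula with complex argument, $\log\Gamma(z+a)-\log\Gamma(z+b)=(a-b)\log z+O(1/|z|)$ for fixed $a,b$. The second is more self-contained and mirrors the Fock computation: substitute $t=e^{-u/N}$, or equivalently write $t^n=(1-s/N)^{n}$ with $1-t=s/N$. This gives
\[
I_n=N^{i}\cdot\frac{N}{N}\int_0^N\Big(1-\frac sN\Big)^{n}s^{-i}\,ds=N^{i}\int_0^N\Big(1-\frac sN\Big)^{N-1}s^{-i}\,ds .
\]
The integrand converges pointwise to $e^{-s}s^{-i}$. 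It is dominated by $e^{-s(N-1)/N}\le e^{-s/2}$ for $N\ge 2$, since $(1-s/N)\le e^{-s/N}$ and $|s^{-i}|=1$. Dominated convergence then yields $I_n=N^{i}(\alpha+o(1))$. Because $|N^{i}|=1$, this gives $\lambda_n=\frac12+\Re(\alpha(n+1)^i)+o(1)$. I expect this step, the justification of the limit with an integrable majorant, to be the only real technical point. It is mild, since the modulus of the oscillatory factor is exactly $1$.

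For the liminf, write $\alpha=|\alpha|e^{i\theta}$. Then
\[
\Re(\alpha(n+1)^i)=|\alpha|\cos\big(\theta+\log(n+1)\big)\ge-|\alpha|,
\]
so $\liminf\lambda_n\ge\frac12-|\alpha|$. For the reverse inequality, choose $x_k$ with $\theta+\log x_k=(2k+1)\pi$ and set $n_k+1=\lfloor x_k\rfloor$. Because $\log(n_k+1)-\log x_k=O(1/x_k)\to 0$, which follows from the mean value theorem as in Proposition \ref{prop:eigenvalue-asymptotic}, the cosine tends to $-1$ along $n_k$. Hence $\lambda_{n_k}\to\frac12-|\alpha|$, completing the proof. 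As a side note, $|\alpha|^2=|\Gamma(1-i)|^2=\pi/\sinh\pi$, which can be recorded for later use in the theorem.
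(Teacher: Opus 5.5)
Your proof is correct and is essentially the paper's argument: the rescaling $1-t=s/(n+1)$, the majorant $e^{-s/2}$ with dominated convergence, and phase selection along integer parts of $e^{(2k+1)\pi-\theta}$ via the mean value theorem. The only slip is calling $t=e^{-u/N}$ ``equivalent'' to $1-t=s/N$, which it is not, but you only use the latter. Your Beta-integral/Stirling alternative is the route the paper takes in Proposition~\ref{prop:bergman-higher-dim}.
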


\begin{proof}
Lemma \ref{lem:bergman-radial} gives
\[
\lambda_n=\frac12+\Re I_n,
\qquad
I_n=(n+1)\int_0^1 (1-t)^{-i}t^n\,dt.
\]
Setting $u=(n+1)(1-t)$, then
\[
I_n=(n+1)^i
\int_0^\infty
u^{-i}\left(1-\frac{u}{n+1}\right)^n
\mathbf 1_{[0,n+1]}(u)\,du.
\]
For each fixed $u>0$,
\[
\left(1-\frac{u}{n+1}\right)^n\mathbf 1_{[0,n+1]}(u)\to e^{-u}
\qquad (\text{as } n\to\infty).
\]
Also, for $n\ge 1$,
\[
0\le
\left(1-\frac{u}{n+1}\right)^n\mathbf 1_{[0,n+1]}(u)
\le
e^{-nu/(n+1)}
\le
e^{-u/2}.
\]
Since $|u^{-i}|=1$, dominated convergence gives
\[
I_n=\alpha (n+1)^i+o(1).
\]
Therefore
\[
\lambda_n=\frac12+\Re \big(\alpha (n+1)^i\big)+o(1).
\]

Since
\[
\Re \big(\alpha (n+1)^i\big)\ge -|\alpha|,
\]
we obtain
\[
\liminf_{n\to\infty}\lambda_n\ge \frac12-|\alpha|.
\]
To get the reverse inequality, write $\alpha=|\alpha|e^{i\theta}$ and define
\[
x_k=e^{(2k+1)\pi-\theta}-1,
\qquad
n_k=\lfloor x_k\rfloor.
\]
Then $n_k\to\infty$, and the mean value theorem gives
\[
\big|\log(n_k+1)-\log(x_k+1)\big|\le
\frac{x_k-n_k}{n_k+1}
\le\frac1{n_k+1}
\to 0.
\]
Since $\log(x_k+1)+\theta=(2k+1)\pi$, it follows that
\[
\cos\big(\log(n_k+1)+\theta\big)\to -1.
\]
Along this subsequence,
\[
\lambda_{n_k}\to \frac12-|\alpha|.
\]
Hence
\[
\liminf_{n\to\infty}\lambda_n
=\frac12-|\alpha|.
\]
\end{proof}

\begin{proposition}\label{prop:bergman-berezin-asymptotic}
Set
\[
\beta
=2\int_0^\infty \frac{u^{-i}}{(1+u)^3}\,du.
\]
Then the Berezin transform of $f$ satisfies
\[
\widetilde f(a)
= \frac12+\Re \big(\beta (1-a^2)^{-i}\big)+o(1)
\qquad (\text{as } a\to 1^-).
\]
In particular,
\[
\liminf_{a\to 1^-}\widetilde f(a)
=\frac12-|\beta|.
\]
\end{proposition}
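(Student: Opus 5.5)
Starting from the formula in Lemma~\ref{lem:bergman-radial} with $\varphi(t)=\frac12+\Re (1-t)^{-i}$, we have
\[
\widetilde f(a)=\tfrac12\,\widetilde 1(a)+\Re\, I(a),\qquad I(a)=(1-a^2)^2\int_0^1\frac{1+a^2t}{(1-a^2t)^3}(1-t)^{-i}\,dt .
\]
Since $T_1=I$, we have $\widetilde 1\equiv 1$, so it suffices to show $I(a)=\beta(1-a^2)^{-i}+o(1)$. Write $\delta=1-a^2\to0^+$ and substitute $1-t=\delta u$, so that $t\in(0,1)$ corresponds to $u\in(0,1/\delta)$ and $dt=\delta\,du$. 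Then $1-a^2t=1-(1-\delta)(1-\delta u)=\delta(1+u-\delta u)$ and $1+a^2t=2-\delta-\delta u+\delta^2u$. Hence
\[
I(a)=\delta^{-i}\int_0^{1/\delta}\frac{2-\delta-\delta u+\delta^2 u}{(1+u-\delta u)^3}\,u^{-i}\,du ,
\]
because the powers of $\delta$ cancel: $\delta^2\cdot\delta\cdot\delta^{-3}=1$.

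The key step is dominated convergence in $u$. For fixed $u>0$ the integrand tends to $2u^{-i}/(1+u)^3$. For domination, on $0<u<1/\delta$ we have $1+u-\delta u=1+u(1-\delta)\ge \tfrac12(1+u)$ once $\delta\le\tfrac12$. The numerator is at most $2$, since $\delta u\le1$ gives $\delta+\delta u-\delta^2u\ge0$. So the integrand is bounded by $16/(1+u)^3$, which is integrable on $(0,\infty)$, uniformly in small $\delta$, with the indicator $\mathbf 1_{(0,1/\delta)}$ absorbed. This yields
\[
I(a)=\delta^{-i}\bigl(\beta+o(1)\bigr)=\beta(1-a^2)^{-i}+o(1),
\]
using $|\delta^{-i}|=1$. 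That is the asymptotic formula.

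For the liminf, the lower bound $\Re(\beta(1-a^2)^{-i})\ge-|\beta|$ is immediate. For the reverse inequality, write $\beta=|\beta|e^{i\theta}$ and choose $a_k\to1^-$ with $1-a_k^2=e^{\theta-(2k+1)\pi}$. Then $\beta(1-a_k^2)^{-i}=|\beta|e^{i(2k+1)\pi}=-|\beta|$, so $\widetilde f(a_k)\to\frac12-|\beta|$. By radiality of $\widetilde f$ this gives the liminf over $|z|\to1$.

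The only delicate point is the uniform integrable majorant near the upper endpoint $u\approx1/\delta$. There $t\approx0$ and the factor $(1-\delta u)$ must not spoil the bound, and the estimate $1+u(1-\delta)\ge\frac12(1+u)$ handles this. Everything else is routine. Evaluating $|\beta|$ in closed form, which the main theorem needs, is a separate beta-function computation and is not required for this proposition.
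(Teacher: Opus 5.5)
Your proof is correct and follows essentially the same route as the paper: the substitution $t=1-\delta u$, dominated convergence to $2u^{-i}/(1+u)^3$, and phase selection along $1-a_k^2=e^{\theta-(2k+1)\pi}$. The only differences are minor. Your majorant $16/(1+u)^3$, obtained from the observation that the numerator lies in $[1,2]$, is slightly sharper than the paper's $C/(1+u)^2$. You also justify $\widetilde 1\equiv 1$ explicitly, which the paper leaves implicit.
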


\begin{proof}
Lemma \ref{lem:bergman-radial} gives
\[
\widetilde f(a)=\frac12+\Re J_a,
\]
where
\[
J_a=(1-a^2)^2
\int_0^1
\frac{1+a^2 t}{(1-a^2 t)^3}(1-t)^{-i}\,dt.
\]
Set
\[
\delta=1-a^2.
\]
Then $a^2=1-\delta$, and with the change of variables $t=1-\delta u$,
\[
J_a=\delta^{-i}
\int_0^\infty
u^{-i}
\frac{2-\delta-\delta u+\delta^2u}{(1+u-\delta u)^3}
\mathbf 1_{[0,1/\delta]}(u)\,du.
\]
For each fixed $u > 0$,
\[
u^{-i}
\frac{2-\delta-\delta u+\delta^2u}{(1+u-\delta u)^3}
\mathbf 1_{[0,1/\delta]}(u)
\longrightarrow
\frac{2u^{-i}}{(1+u)^3}
\qquad (\text{as } \delta\to 0^+).
\]
If $\delta\le \frac12$, then $1+u-\delta u\ge 1+\frac u2$, while
\[
|2-\delta-\delta u+\delta^2u|\le 3+u.
\]
Hence
\[
\left|
u^{-i}
\frac{2-\delta-\delta u+\delta^2u}{(1+u-\delta u)^3}
\mathbf 1_{[0,1/\delta]}(u)
\right|
\le\frac{C}{(1+u)^2}
\]
for some absolute constant $C$, and the right-hand side is integrable on $[0,\infty)$. Dominated convergence therefore gives
\[
J_a
=\beta\,\delta^{-i}+o(1)
=\beta(1-a^2)^{-i}+o(1).
\]
Thus
\[
\widetilde f(a)
=\frac12+\Re \big(\beta(1-a^2)^{-i}\big)+o(1).
\]

Since
\[
\Re \big(\beta(1-a^2)^{-i}\big)\ge -|\beta|,
\]
we obtain
\[
\liminf_{a\to 1^-}\widetilde f(a)\ge \frac12-|\beta|.
\]
Write $\beta=|\beta|e^{i\phi}$. For large $k$, define
\[
\delta_k=e^{-((2k+1)\pi-\phi)},
\qquad
a_k=\sqrt{1-\delta_k}.
\]
Then $a_k\to 1^-$ and
\[
\log \frac1{1-a_k^2}+\phi
=(2k+1)\pi.
\]
Hence
\[
\cos \left(\log \frac1{1-a_k^2}+\phi\right)=-1,
\]
so
\[
\widetilde f(a_k)\to \frac12-|\beta|.
\]
Therefore
\[
\liminf_{a\to 1^-}\widetilde f(a)
=\frac12-|\beta|.
\]
\end{proof}

\begin{lemma}\label{lem:bergman-constants}
With $\alpha$ and $\beta$ as above,
\[
|\alpha|=
\sqrt{\frac{\pi}{\sinh\pi}},
\qquad
|\beta|
=\sqrt2\,\frac{\pi}{\sinh\pi}.
\]
In particular,
\[
|\alpha|> \frac12
\qquad\text{and}\qquad
|\beta|<\frac12.
\]
\end{lemma}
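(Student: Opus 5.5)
Both constants come from Gamma-function identities, and the comparisons with $\tfrac12$ then reduce to numerical bounds on $\pi/\sinh\pi$.

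\textbf{Computing $|\alpha|$.} By definition $\alpha=\int_0^\infty e^{-u}u^{-i}\,du=\Gamma(1-i)$. The reflection-type identity $|\Gamma(1+iy)|^2=\pi y/\sinh(\pi y)$ follows from $\Gamma(1+iy)\Gamma(1-iy)=\pi iy/\sin(\pi iy)$ and $\overline{\Gamma(1+iy)}=\Gamma(1-iy)$. Taking $y=1$ gives $|\alpha|^2=\pi/\sinh\pi$.

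\textbf{Computing $|\beta|$.} Recognize the integral as a Beta integral:
\[
\int_0^\infty \frac{u^{s-1}}{(1+u)^3}\,du=B(s,3-s)=\frac{\Gamma(s)\Gamma(3-s)}{\Gamma(3)}.
\]
With $s=1-i$ this gives $\beta=\Gamma(1-i)\Gamma(2+i)$. Since $\Gamma(2+i)=(1+i)\Gamma(1+i)$, we obtain
\[
|\beta|=|1+i|\,|\Gamma(1-i)|\,|\Gamma(1+i)|=\sqrt2\,\frac{\pi}{\sinh\pi}.
\]
The Beta identity for complex $s$ with $0<\Re s<3$ is standard, by analytic continuation or directly from the substitution $u=x/(1-x)$. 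The case $\Re s=1$ is interior to this range, so no convergence issue arises.

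\textbf{Numerical inequalities.} We have $\sinh\pi=(e^\pi-e^{-\pi})/2\approx 11.549$, so $\pi/\sinh\pi\approx 0.2720$. This gives $|\alpha|\approx0.5216>\tfrac12$ and $|\beta|\approx0.3847<\tfrac12$.

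To make these rigorous:
\begin{itemize}
\item $|\alpha|>\tfrac12$ is equivalent to $\sinh\pi<4\pi$. This holds because $e^\pi<24$ (from $\pi<3.1416$ and $e^{3.1416}<23.2$), so $\sinh\pi<12<4\pi$.
\item $|\beta|<\tfrac12$ is equivalent to $\sinh\pi>2\sqrt2\,\pi\approx 8.886$. This holds because $e^\pi>e^3>20$, so $\sinh\pi>(20-1)/2=9.5$.
\end{itemize}

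\textbf{Main obstacle.} The only delicate point is the $|\alpha|>\tfrac12$ margin. It is narrow (about $4\%$), so the upper bound on $e^\pi$ must be reasonably sharp. A convenient route is $e^\pi<e^{3.15}=e^3e^{0.15}<20.09\cdot1.162<23.4$, which gives $\sinh\pi<11.7<4\pi\approx12.566$.
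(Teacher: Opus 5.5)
Your proof is correct and follows the paper's route. The paper also obtains $|\alpha|^2=\pi/\sinh\pi$ from the reflection formula at $z=i$, and it gets $\beta=(1+i)|\alpha|^2$, i.e.\ $\beta=\Gamma(1-i)\Gamma(2+i)$, by writing $2/(1+u)^3=\int_0^\infty s^2e^{-s}e^{-su}\,ds$ and applying Fubini, which is just the standard derivation of the Beta identity you cite. The one difference is that you give explicit bounds on $e^\pi$ for the comparisons with $\tfrac12$, which makes rigorous what the paper supports only by quoting $|\alpha|\approx 0.52$ and $|\beta|\approx 0.38$.
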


\begin{proof}
First,
\[
\frac{2}{(1+u)^3}
=\int_0^\infty s^2 e^{-s}e^{-su}\,ds.
\]
Using Fubini,
\[
\beta=
\int_0^\infty s^2e^{-s}
\left(\int_0^\infty u^{-i}e^{-su}\,du\right)\,ds.
\]
In the inner integral, the change of variables $v=su$ gives
\[
\int_0^\infty u^{-i}e^{-su}\,du
=
s^{i-1}\int_0^\infty e^{-v}v^{-i}\,dv
=
s^{i-1}\alpha.
\]
Hence
\[
\beta=\alpha\int_0^\infty s^{1+i}e^{-s}\,ds.
\]
Integrating by parts,
\[
\int_0^\infty s^{1+i}e^{-s}\,ds
= (1+i)\int_0^\infty s^i e^{-s}\,ds
= (1+i)\overline\alpha.
\]
Therefore
\[
\beta=(1+i)|\alpha|^2,
\]
and so
\[
|\beta|=\sqrt2\,|\alpha|^2.
\]

Next, let
\[
\Gamma(w)=\int_0^\infty e^{-s}s^{w-1}\,ds
\qquad (\Re w>0).
\]
Then $\alpha=\Gamma(1-i)$. By analytic continuation, the reflection formula
\[
\Gamma(z)\Gamma(1-z)=\frac{\pi}{\sin(\pi z)}
\]
at $z=i$, we obtain
\[
\Gamma(i)\Gamma(1-i)
=\frac{\pi}{\sin(\pi i)}=-i\frac{\pi}{\sinh\pi}.
\]
Since $\Gamma(1+i)=i\Gamma(i),$ it follows that
\[
|\alpha|^2
=\Gamma(1-i)\Gamma(1+i)
= i \Gamma(1-i)\Gamma(i)
= \frac{\pi}{\sinh\pi}.
\]
Thus
\[
|\alpha|=\sqrt{\frac{\pi}{\sinh\pi}},
\qquad
|\beta|=\sqrt2\,\frac{\pi}{\sinh\pi}.
\]
Numerically,
\[
|\alpha|\approx 0.52,
\qquad
|\beta|\approx 0.38.
\]
In particular,
\[
|\alpha|>\frac12
\qquad\text{and}\qquad
|\beta|<\frac12.
\]
\end{proof}

\begin{proof}[Proof of Theorem \ref{thm:bergman-counterexample}]
Proposition \ref{prop:bergman-berezin-asymptotic} and Lemma \ref{lem:bergman-constants} give
\[
\liminf_{|z|\to 1}\widetilde f(z)
=\frac12-|\beta|
=\frac12-\sqrt2\,\frac{\pi}{\sinh\pi}>0.
\]

By Proposition \ref{prop:bergman-eigenvalue-asymptotic} and Lemma \ref{lem:bergman-constants}, there exists a sequence $n_k\to\infty$ such that
\[
\lambda_{n_k}\to \frac12-|\alpha| = \frac12-\sqrt{\frac{\pi}{\sinh\pi}}<0.
\]
Since
\[
T_f e_n=\lambda_n e_n,
\]
we have
\[
\left(T_f-\left(\frac12-|\alpha|\right)I\right)e_{n_k}\to 0.
\]
The vectors $e_{n_k}$ are orthonormal, hence they converge weakly to $0$. By Weyl's criterion,
\[
\frac12-|\alpha|\in \sigma_{\mathrm{ess}}(T_f).
\]
This point of the essential spectrum is negative, so $T_f$ is not essentially positive.
\end{proof}

\begin{remark}
The whole argument is driven by the boundary scales
\[
1-t\asymp \frac1{n+1}
\qquad\text{and}\qquad
1-t\asymp 1-a^2.
\]
The two asymptotics come directly from these rescalings and dominated convergence. The only special-function input is the final evaluation of $|\alpha|$ in Lemma \ref{lem:bergman-constants}. In the Fock space, the Fock analogue is $t \asymp n+1$ versus $t \asymp |z|^2$. 
\end{remark}

\begin{remark}\label{rem:bergman-param}
The Bergman counterexample admits the same kind of parametric generalization as Remark~\ref{rem:fock-param}.
For $\omega > 0$ and $c \in \mathbb{R}$, set
\[
  f_{\omega,c}(z)
  = c + \cos \Bigl(\omega\log\frac{1}{1-|z|^2}\Bigr)
  = c + \Re(1-|z|^2)^{-i\omega}.
\]
The proofs of Propositions~\ref{prop:bergman-eigenvalue-asymptotic}
and~\ref{prop:bergman-berezin-asymptotic} go through with $\omega$ in place of $1$.
Writing
\[
  \alpha_\omega = \Gamma(1-i\omega),
\]
one obtains
\[
  \lambda_n
  = c + \Re\bigl(\alpha_\omega (n+1)^{i\omega}\bigr) + o(1)
  = c + |\alpha_\omega|
    \cos\bigl(\omega\log(n+1)+\arg\alpha_\omega\bigr) + o(1)
  \qquad (\text{as } n\to\infty),
\]
and
\[
  \widetilde f_{\omega,c}(a)
  = c + \Re\bigl(\beta_\omega (1-a^2)^{-i\omega}\bigr) + o(1)
  \qquad (\text{as } a\to1^-),
\]
where
\[
  \beta_\omega
  = 2\int_0^\infty \frac{u^{-i\omega}}{(1+u)^3}\,du
  = \Gamma(1-i\omega)\Gamma(2+i\omega)
  = (1+i\omega)|\alpha_\omega|^2.
\]
Equivalently,
\[
  \widetilde f_{\omega,c}(a)
  = c + \sqrt{1+\omega^2}\,|\alpha_\omega|^2
    \cos \Bigl(\omega\log\frac{1}{1-a^2}+\arctan\omega\Bigr) + o(1).
\]
Any choice of $c$ satisfying
\[
  \sqrt{1+\omega^2}\,|\alpha_\omega|^2 < c < |\alpha_\omega|
\]
gives a radial counterexample on the Bergman space.
Moreover,
\[
  |\alpha_\omega|^2 = \frac{\pi\omega}{\sinh(\pi\omega)},
\]
so this interval is nonempty for every $\omega>0$.
Hence the set of such $(\omega,c)$ is open, and for each $\omega>0$ it is nonempty.
\end{remark}

\subsection{Higher-dimensional variants}

In this subsection let $d\ge 1$, let $\mathbb B_d=\{z\in\C^d:|z|<1\}$, and let $dV$ denote Lebesgue volume measure on $\C^d$. We use the standard normalizations
\[
\|h\|_{F^2(\C^d)}^2=\frac1{\pi^d}\int_{\C^d}|h(z)|^2e^{-|z|^2}\,dV(z),
\qquad
\|h\|_{A^2(\mathbb B_d)}^2=\frac{d!}{\pi^d}\int_{\mathbb B_d}|h(z)|^2\,dV(z).
\]

\begin{proposition}\label{prop:fock-higher-dim}
On $F^2(\C^d)$, the symbol
\[
f(z)=\frac12+\cos(2|z|)
\]
satisfies
\[
\liminf_{|z|\to\infty}\widetilde f(z)=\frac12-e^{-1}>0,
\]
but $T_f$ is not essentially positive.
\end{proposition}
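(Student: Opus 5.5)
The plan is to imitate the one-dimensional proof of Theorem~\ref{thm:fock-counterexample} after a radial reduction. There are two ingredients: an eigenvalue asymptotic for the diagonal action on monomials, and a Berezin asymptotic obtained by dominated convergence.

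\textbf{Eigenvalues.} For a multi-index $\gamma$ with $|\gamma|=m$, the monomials $z^\gamma/\sqrt{\gamma!}$ form an orthonormal basis of $F^2(\C^d)$. For radial $f(z)=\varphi(|z|)$, integration in polar coordinates shows $T_f$ is diagonal in this basis, with eigenvalue
\[
\lambda_m=\frac{1}{(m+d-1)!}\int_0^\infty \varphi(\sqrt t)\,e^{-t}t^{m+d-1}\,dt,
\]
which depends only on $m$. To verify this, note that the angular integral over the sphere kills off-diagonal terms. The normalisation is fixed by taking $\varphi\equiv1$, which must give $1$.

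This is exactly the one-dimensional eigenvalue from Lemma~\ref{lem:radial-diagonal} with $n=m+d-1$. Proposition~\ref{prop:eigenvalue-asymptotic} therefore applies verbatim, with $a=\sqrt{m+d}$, and gives
\[
\lambda_m=\tfrac12+e^{-1/2}\cos(2\sqrt{m+d})+o(1).
\]
Choose $m_k$ so that $\lambda_{m_k}\to\frac12-e^{-1/2}<0$. For each $k$ pick one basis monomial of degree $m_k$; these are orthonormal and hence weakly null. Weyl's criterion then places $\frac12-e^{-1/2}$ in $\sigma_{\mathrm{ess}}(T_f)$, so $T_f$ is not essentially positive.

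\textbf{Berezin transform.} In $\C^d$,
\[
\widetilde f(z)=\pi^{-d}\int_{\C^d} f(w)\,e^{-|w-z|^2}\,dV(w).
\]
This is unitarily invariant because $f$ is, so we may take $z=s e_1$ with $s>0$. Write $w=se_1+x$ with $x=(x_1,y_1,x',)\in\mathbb R^{2d}$, where $x_1=\Re w_1-s$. Then
\[
|w|-s=\frac{2sx_1+|x|^2}{|w|+s}\to x_1 \qquad (s\to\infty)
\]
pointwise. The integrand $e^{-|x|^2}e^{2i(|w|-s)}$ is dominated by $e^{-|x|^2}$, so dominated convergence gives
\[
\widetilde f(se_1)=\tfrac12+\Re\Bigl(e^{2is}\,\pi^{-d}\!\int e^{-|x|^2}e^{2ix_1}\,dx\Bigr)+o(1).
\]
The remaining $2d-1$ Gaussian integrals each contribute $\sqrt\pi$. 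The $x_1$ integral is $\sqrt\pi e^{-1}$, as computed in Proposition~\ref{prop:berezin-asymptotic}. Hence
\[
\widetilde f(z)=\tfrac12+e^{-1}\cos(2|z|)+o(1).
\]
Taking $|z|=(2k+1)\pi/2$ shows the liminf equals $\frac12-e^{-1}>0$.

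\textbf{Main point to check.} The only step requiring care is the radial eigenvalue formula and its normalisation in dimension $d$. Concretely, I will verify
\[
\pi^{-d}\int |z^\gamma|^2 e^{-|z|^2}\,dV=\gamma!
\]
and express the radial integral through the surface measure of $S^{2d-1}$ together with $\int_{S}|\zeta^\gamma|^2\,d\sigma$. The ratio of these then yields the stated $t^{m+d-1}/(m+d-1)!$ weight. Everything else reduces to the one-dimensional results already proved.
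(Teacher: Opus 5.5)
Your proposal is correct and follows the paper's proof essentially step for step. Off-diagonal terms vanish in the monomial basis, and the angular factor $\int_{S^{2d-1}}|\zeta^\gamma|^2\,d\sigma$ cancels, giving the one-dimensional eigenvalue with $n=m+d-1$ (so $a=\sqrt{m+d}$); Weyl's criterion is then applied along monomials of degree $m_k$ (take the $m_k$ strictly increasing so these are distinct), and the Berezin transform is reduced to $z=se_1$, where dominated convergence gives $\pi^{-d}\int e^{-|x|^2}e^{2ix_1}\,dx=e^{-1}$.
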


\begin{proof}
Write
\[
e_\mu(z)=\frac{z^\mu}{\sqrt{\mu!}},
\qquad
\mu\in\mathbb N^d.
\]
If $\mu\ne \nu$, choose $j$ with $\mu_j\ne \nu_j$. Rotating the $j$th coordinate shows that
\[
\langle T_f z^\mu,z^\nu\rangle=0.
\]
Thus $T_f$ is diagonal in the monomial basis. If $|\mu|=m$, then
\[
T_f e_\mu=\lambda_m e_\mu
\]
with
\[
\lambda_m=
\frac{\int_0^\infty (\frac12+\cos(2r))e^{-r^2}r^{2m+2d-1}\,dr}
{\int_0^\infty e^{-r^2}r^{2m+2d-1}\,dr}
=
\frac1{(m+d-1)!}\int_0^\infty \left(\frac12+\cos(2\sqrt t)\right)e^{-t}t^{m+d-1}\,dt.
\]
This quotient is independent of the choice of $\mu$ with $|\mu|=m$, since in polar coordinates $z=r\zeta$ the same angular factor $\int_{S^{2d-1}}|\zeta^\mu|^2\,d\sigma(\zeta)$ appears in both numerator and denominator and cancels.
The proof of Proposition \ref{prop:eigenvalue-asymptotic} applies verbatim with $n$ replaced by $m+d-1$. Hence
\[
\lambda_m=\frac12+e^{-1/2}\cos(2\sqrt{m+d})+o(1)
\]
and therefore
\[
\liminf_{m\to\infty}\lambda_m=\frac12-e^{-1/2}<0.
\]

The Berezin transform is radial by unitary invariance, so it is enough to take $z=se_1$, where $e_1=(1,0,\dots,0)$. Writing $w=se_1+u$ gives
\[
\widetilde f(se_1)=\frac12+\Re(e^{2is}J_s),
\]
where
\[
J_s=\frac1{\pi^d}\int_{\C^d} e^{-|u|^2}e^{2i(|se_1+u|-s)}\,dV(u).
\]
For each fixed $u\in\C^d$,
\[
|se_1+u|-s=\frac{2s\Re u_1+|u|^2}{|se_1+u|+s}\to \Re u_1
\]
as $s\to\infty$. Since the integrand is bounded by $\pi^{-d}e^{-|u|^2}$, dominated convergence gives
\[
J_s\to \frac1{\pi^d}\int_{\C^d} e^{-|u|^2}e^{2i\Re u_1}\,dV(u)=e^{-1}.
\]
Thus
\[
\widetilde f(z)=\frac12+e^{-1}\cos(2|z|)+o(1)
\]
as $|z|\to\infty$, and so
\[
\liminf_{|z|\to\infty}\widetilde f(z)=\frac12-e^{-1}>0.
\]

Choose integers $m_k\to\infty$ with $\lambda_{m_k}\to \frac12-e^{-1/2}$. Then
\[
\left(T_f-\left(\frac12-e^{-1/2}\right)I\right)e_{(m_k,0,\dots,0)}\to 0.
\]
The vectors $e_{(m_k,0,\dots,0)}$ are orthonormal, hence weakly null. Weyl's criterion gives
\[
\frac12-e^{-1/2}\in \sigma_{\mathrm{ess}}(T_f),
\]
so $T_f$ is not essentially positive.
\end{proof}

\begin{proposition}\label{prop:bergman-higher-dim}
Let
\[
\alpha=\Gamma(1-i), \quad
\beta_d=\frac{\Gamma(1-i)\Gamma(d+1+i)}{\Gamma(d+1)}, \quad
c_d=\frac{|\alpha|+|\beta_d|}{2}.
\]
On $A^2(\mathbb B_d)$, the symbol
\[
f_d(z)=c_d+\cos\left(\log \frac1{1-|z|^2}\right)
\]
satisfies
\[
\liminf_{|z|\to 1^-}\widetilde f_d(z)=c_d-|\beta_d|>0,
\]
but $T_{f_d}$ is not essentially positive.
\end{proposition}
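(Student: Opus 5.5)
The proof follows the one-dimensional Bergman argument, combined with the multivariable bookkeeping used in Proposition~\ref{prop:fock-higher-dim}. There are four steps: diagonalize $T_{f_d}$, compute the asymptotics of its eigenvalues, compute the asymptotics of the Berezin transform, and compare the two amplitudes.

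\emph{Step 1: diagonalization.} Rotating individual coordinates shows that $T_{f_d}$ is diagonal in the monomial basis. Passing to polar coordinates $z=r\zeta$, the angular factor cancels between numerator and denominator. For $|\mu|=m$ this gives eigenvalues
\[
\lambda_m=\frac{\int_0^1\varphi(t)t^{m+d-1}(1-t)^{0}\,dt\cdot(\text{normalization})}{\cdots}
=\frac{(m+d)!}{m!\,(d-1)!}\int_0^1 \varphi(t)\,t^{m}(1-t)^{d-1}\,dt ,
\]
where $\varphi(t)=c_d+\Re(1-t)^{-i}$. The factor $(1-t)^{d-1}$ does not in fact appear. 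Radial integration gives the weight $r^{2m+2d-1}\,dr=\tfrac12 t^{m+d-1}\,dt$, so the normalized formula is
\[
\lambda_m=\frac{\int_0^1\varphi(t)t^{m+d-1}\,dt}{\int_0^1t^{m+d-1}\,dt}=(m+d)\int_0^1\varphi(t)t^{m+d-1}\,dt.
\]
This is exactly Lemma~\ref{lem:bergman-radial} with $n$ replaced by $m+d-1$.

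\emph{Step 2: eigenvalue asymptotics.} The proof of Proposition~\ref{prop:bergman-eigenvalue-asymptotic} now applies verbatim and gives
\[
\lambda_m=c_d+\Re\bigl(\alpha(m+d)^i\bigr)+o(1).
\]
The same subsequence construction produces $m_k\to\infty$ with $\lambda_{m_k}\to c_d-|\alpha|$. Applying Weyl's criterion to the orthonormal vectors $e_{(m_k,0,\dots,0)}$ then places $c_d-|\alpha|$ in $\sigma_{\mathrm{ess}}(T_{f_d})$. This point is negative as soon as $|\beta_d|<|\alpha|$, because then $c_d<|\alpha|$.

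\emph{Step 3: Berezin asymptotics.} By unitary invariance it suffices to take $z=ae_1$. The normalized kernel satisfies $|k_z(w)|^2=(1-a^2)^{d+1}/|1-aw_1|^{2d+2}$. I would reduce this to a one-variable radial integral in either of two ways. One is to average over the unitary group, using the expansion $(1-\rho e^{i\theta})^{-(d+1)}$ in the spirit of Lemma~\ref{lem:bergman-radial}. The other, cleaner route uses the Taylor coefficients of the reproducing kernel: the coefficients of $(1-\langle w,z\rangle)^{-(d+1)}$ give
\[
\widetilde f_d(a)=(1-a^2)^{d+1}\sum_{m\ge0}\binom{m+d}{d}a^{2m}\lambda_m .
\]
This expresses $\widetilde f_d$ as an average of the eigenvalues. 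Substituting $\lambda_m=(m+d)\int_0^1\varphi(t)t^{m+d-1}\,dt$ and summing the series yields an explicit kernel $K_d(a^2t)$, which is a rational function of order $(1-a^2t)^{-(d+2)}$. Rescaling $t=1-\delta u$ with $\delta=1-a^2$ and applying dominated convergence, as in Proposition~\ref{prop:bergman-berezin-asymptotic}, gives
\[
\widetilde f_d(a)=c_d+\Re\bigl(\beta_d\,\delta^{-i}\bigr)+o(1),
\qquad
\beta_d=\int_0^\infty \frac{(d+1)\,u^{-i}\,(\text{polynomial})}{(1+u)^{d+2}}\,du .
\]
Writing $(1+u)^{-(d+2)}=\Gamma(d+2)^{-1}\int_0^\infty s^{d+1}e^{-s(1+u)}\,ds$ and using Fubini, exactly as in Lemma~\ref{lem:bergman-constants}, identifies this constant as $\Gamma(1-i)\Gamma(d+1+i)/\Gamma(d+1)$. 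The liminf equality then follows by choosing $\delta_k$ along the phase $(2k+1)\pi-\arg\beta_d$, as before.

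\emph{Step 4 (the main obstacle): comparing amplitudes.} It remains to prove $|\beta_d|<|\alpha|$, which makes $c_d-|\beta_d|=(|\alpha|-|\beta_d|)/2>0$ and $c_d-|\alpha|<0$. Using $\Gamma(d+1+i)=\Gamma(1+i)\prod_{j=1}^d(j+i)$ together with $|\Gamma(1+i)|=|\alpha|$, we get
\[
\frac{|\beta_d|}{|\alpha|}=|\alpha|\prod_{j=1}^d\frac{\sqrt{j^2+1}}{j}.
\]
The inequality therefore reduces to
\[
\prod_{j=1}^d\sqrt{1+j^{-2}}<\frac{1}{|\alpha|}=\sqrt{\frac{\sinh\pi}{\pi}}.
\]
The infinite product satisfies $\prod_{j\ge1}(1+j^{-2})=\sinh\pi/\pi$, and every partial product is strictly smaller than the full product. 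Hence the inequality holds for every $d$, with the ratio $|\beta_d|/|\alpha|$ tending to $1$ as $d\to\infty$. This explains why a fixed offset $1/2$ eventually fails while $c_d$ works. The computational risk lies in the kernel identification in Step 3; I would verify it against the case $d=1$, where it should recover $\beta=(1+i)|\alpha|^2$.
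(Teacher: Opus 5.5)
Your proposal is correct and follows the paper's proof essentially step for step: diagonalize with $\lambda_m=(m+d)\int_0^1\varphi(t)t^{m+d-1}\,dt$, write $\widetilde f_d(ae_1)=(1-a^2)^{d+1}\sum_m\binom{m+d}{m}a^{2m}\lambda_m$, rescale $t=1-\delta u$ with dominated convergence, and use the Euler product for $\sinh$ to get $|\beta_d|<|\alpha|$. Two things to tidy: drop the discarded first formula in Step 1, and note that your unspecified ``(polynomial)'' factor in $\beta_d$ is identically $1$, because the summed kernel is $t^{d-1}(d+a^2t)/(1-a^2t)^{d+2}$ and the rescaled integrand tends to $(d+1)u^{-i}(1+u)^{-(d+2)}$, so your Fubini computation yields exactly $\Gamma(1-i)\Gamma(d+1+i)/\Gamma(d+1)$.
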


\begin{proof}
Write
\[
e_\mu(z)=\left(\frac{(d+|\mu|)!}{\mu!\,d!}\right)^{1/2}z^\mu,
\qquad
\mu\in\mathbb N^d.
\]
If $\mu\ne \nu$, the same rotation argument shows that
\[
\langle T_{f_d} z^\mu,z^\nu\rangle=0.
\]
Thus $T_{f_d}$ is diagonal in the monomial basis. If $|\mu|=m$, then
\[
T_{f_d}e_\mu=\lambda_m e_\mu
\]
with
\[
\lambda_m=
\frac{\int_0^1 \left(c_d+\cos(\log \frac1{1-r^2})\right)r^{2m+2d-1}\,dr}
{\int_0^1 r^{2m+2d-1}\,dr}
=
(m+d)\int_0^1 \left(c_d+\cos\left(\log \frac1{1-t}\right)\right)t^{m+d-1}\,dt.
\]
Again, this quotient is independent of the choice of $\mu$ with $|\mu|=m$, because in polar coordinates $z=r\zeta$ the common angular factor $\int_{S^{2d-1}}|\zeta^\mu|^2\,d\sigma(\zeta)$ cancels.
Hence
\[
\lambda_m=c_d+\Re I_m,
\qquad
I_m=(m+d)\int_0^1 (1-t)^{-i}t^{m+d-1}\,dt.
\]
By the beta integral,
\[
I_m=\Gamma(1-i)\frac{\Gamma(m+d+1)}{\Gamma(m+d+1-i)}=\alpha(m+d)^i+o(1).
\]
Thus
\[
\lambda_m=c_d+\Re(\alpha(m+d)^i)+o(1),
\]
and the same phase-selection argument as in Proposition \ref{prop:bergman-eigenvalue-asymptotic} gives
\[
\liminf_{m\to\infty}\lambda_m=c_d-|\alpha|.
\]

The Berezin transform is radial as well, so it is enough to take $z=ae_1$ with $0\le a<1$. Let $Q_m$ denote the orthogonal projection onto the homogeneous polynomials of degree $m$. Since
\[
k_{ae_1}(w)=\frac{(1-a^2)^{(d+1)/2}}{(1-aw_1)^{d+1}}
=(1-a^2)^{(d+1)/2}\sum_{m\ge 0}\binom{m+d}{m}a^m w_1^m
\]
and
\[
\|w_1^m\|_{A^2(\mathbb B_d)}^2=\frac{m!\,d!}{(m+d)!}=\binom{m+d}{m}^{-1},
\]
we get
\[
\|Q_m k_{ae_1}\|^2=(1-a^2)^{d+1}\binom{m+d}{m}a^{2m}.
\]
Therefore
\[
\widetilde f_d(ae_1)=\sum_{m\ge 0}\lambda_m\|Q_m k_{ae_1}\|^2
=(1-a^2)^{d+1}\sum_{m\ge 0}\lambda_m\binom{m+d}{m}a^{2m}.
\]
Substituting the formula for $\lambda_m$ and using
\[
\sum_{m\ge 0}\binom{m+d}{m}x^m=\frac1{(1-x)^{d+1}},
\qquad
\sum_{m\ge 0}(m+d)\binom{m+d}{m}x^m=\frac{d+x}{(1-x)^{d+2}},
\]
we obtain
\[
\widetilde f_d(ae_1)=
(1-a^2)^{d+1}\int_0^1
t^{d-1}\frac{d+a^2 t}{(1-a^2 t)^{d+2}}
\left(c_d+\cos\left(\log \frac1{1-t}\right)\right)\,dt.
\]
For the oscillatory part,
\[
J_a=
(1-a^2)^{d+1}\int_0^1
t^{d-1}\frac{d+a^2 t}{(1-a^2 t)^{d+2}}(1-t)^{-i}\,dt.
\]
Set $\delta=1-a^2$ and $t=1-\delta u$. Then
\[
J_a=
\delta^{-i}\int_0^{1/\delta}
u^{-i}(1-\delta u)^{d-1}
\frac{d+1-\delta-\delta u+\delta^2u}{(1+u-\delta u)^{d+2}}\,du.
\]
For each fixed $u>0$, the integrand converges to
\[
(d+1)\frac{u^{-i}}{(1+u)^{d+2}}.
\]
If $\delta\le \frac12$, then $1+u-\delta u\ge 1+\frac u2$ and $(1-\delta u)^{d-1}\le 1$ on the interval of integration, so the integrand is bounded by $C_d(1+u)^{-2}$. Dominated convergence gives
\[
J_a=\beta_d(1-a^2)^{-i}+o(1),
\]
where
\[
\beta_d=(d+1)\int_0^\infty \frac{u^{-i}}{(1+u)^{d+2}}\,du
=\frac{\Gamma(1-i)\Gamma(d+1+i)}{\Gamma(d+1)}.
\]
Hence
\[
\widetilde f_d(ae_1)=c_d+\Re(\beta_d(1-a^2)^{-i})+o(1),
\]
and the same phase-selection argument as in Proposition \ref{prop:bergman-berezin-asymptotic} gives
\[
\liminf_{a\to 1^-}\widetilde f_d(ae_1)=c_d-|\beta_d|.
\]

It remains to show $|\beta_d|<|\alpha|$. Using
\[
\Gamma(d+1+i)=\Gamma(1+i)\prod_{k=1}^d (k+i)
\]
and $|\Gamma(1+i)|=|\Gamma(1-i)|=|\alpha|$, we obtain
\[
|\beta_d|^2=|\alpha|^4\prod_{k=1}^d\left(1+\frac1{k^2}\right).
\]
By Lemma \ref{lem:bergman-constants},
\[
|\alpha|^2=\frac{\pi}{\sinh\pi}.
\]
The Euler product for $\sinh$ gives
\[
\prod_{k=1}^\infty \left(1+\frac1{k^2}\right)=\frac{\sinh\pi}{\pi}.
\]
Therefore $|\beta_d|<|\alpha|$. It follows that
\[
c_d-|\beta_d|>0
\qquad\text{and}\qquad
c_d-|\alpha|<0.
\]
The first inequality gives the stated formula for $\liminf_{|z|\to 1^-}\widetilde f_d(z)$, and the second gives
\[
\liminf_{m\to\infty}\lambda_m=c_d-|\alpha|<0.
\]
Choosing integers $m_k\to\infty$ with $\lambda_{m_k}\to c_d-|\alpha|$, Weyl's criterion applied to $e_{(m_k,0,\dots,0)}$ shows that
\[
c_d-|\alpha|\in \sigma_{\mathrm{ess}}(T_{f_d}).
\]
Hence $T_{f_d}$ is not essentially positive.
\end{proof}

\begin{remark}
The fixed constant $\frac12$ still works on $A^2(\mathbb B_d)$ as long as $|\beta_d|<\frac12$. The product formula above shows that $|\beta_d|$ increases strictly with $d$. Exact computations show that $|\beta_{11}| < 0.5$ while $|\beta_{12}|>0.5$. 
Hence the symbol
\[
\frac12+\cos\left(\log \frac1{1-|z|^2}\right)
\]
is still a counterexample on $A^2(\mathbb B_d)$ for $1\le d\le 11$, but not for $d\ge 12$.
\end{remark}

\subsection*{Acknowledgements}
The author was supported by a Taussky--Todd Fellowship. The author thanks Jeck Lim for helpful comments on an earlier draft.

\bibliographystyle{plain} 
\bibliography{bibliography}

\end{document}